
\documentclass[final,leqno,onefignum,onetabnum]{siamltex1213}

\usepackage{showkeys}
\usepackage{amssymb,amsmath}
\usepackage{algorithm}
\usepackage{algorithmic}

\newtheorem{remark}{\bf  Remark}[section]
\newtheorem{assumption}{\bf  Assumption}

% Math Sets

\newcommand{\bbE}{{\mathbb{E}}}

\newcommand{\bbR}{{\mathbb{R}}}
\newcommand{\bbN}{{\mathbb{N}}}
\newcommand{\bbJ}{{\mathbb{J}}}

\newcommand{\cB}{\mathcal{B}}
\newcommand{\cC}{\mathcal{C}}

\newcommand{\cF}{\mathcal{F}}

\newcommand{\cN}{\mathcal{N}}

\newcommand{\cR}{\mathcal{R}}

\newcommand{\cQ}{\mathcal{Q}}

\newcommand{\argmax}{\operatornamewithlimits{argmax}}
\newcommand{\argmin}{\operatornamewithlimits{argmin}}

\newcommand{\bskappa}{{\boldsymbol{\kappa}}}
\newcommand{\bsnull}{{\boldsymbol 0}}

\newcommand{\bsrho}{{\boldsymbol{\rho}}}

\newcommand{\bsnu}{{\boldsymbol{\nu}}}
\newcommand{\bsmu}{{\boldsymbol{\mu}}}

\newcommand{\bse}{{\boldsymbol{e}}}

\newcommand{\bsy}{{\boldsymbol{y}}}

\newcommand{\bsgamma}{{\boldsymbol{\gamma}}}

\newcommand{\beq}{\begin{equation}}
\newcommand{\eeq}{\end{equation}}
\newcommand{\ba}{\begin{array}}
\newcommand{\ea}{\end{array}}

\title{
%Best $N$-term approximation and convergence 
Sparse polynomial approximation
for optimal control problems constrained by elliptic PDEs with lognormal random coefficients 
%\thanks{funding info}
%\thanks{This work is supported by DARPA's EQUiPS program under contract W911NF-15-2-0121.}
\thanks{This work is 
   supported by 
   %DARPA's EQUiPS program under contract number
    %W911NF-15-2-0121, 
    DOE grant DE-SC0019393, AFOSR grant FA9550-17-1-0190
    and NSF grant ACI-1550593.} 
}

\author{Peng Chen \thanks{Oden Institute for Computational Engineering \&
    Sciences, The University of Texas at Austin, Austin, TX 78712
    (\email{peng@ices.utexas.edu}).}   
    \and Omar Ghattas \thanks{Oden Institute
    for Computational Engineering \& Sciences, Department of
    Mechanical Engineering, and Department of Geological Sciences, The
    University of Texas at Austin, Austin, TX 78712
    (\email{omar@ices.utexas.edu}).}  
    }
\begin{document}
\maketitle
\slugger{sisc}{xxxx}{xx}{x}{x--x}%slugger should be set to mms, siap, sicomp, sicon, sidma, sima, simax, sinum, siopt, sisc, or sirev

\begin{abstract}
In this work, we consider optimal control problems constrained by elliptic partial differential equations (PDEs) with lognormal random coefficients, which are represented by a countably infinite-dimensional random parameter with i.i.d. normal distribution. We approximate the optimal solution by a suitable truncation of its Hermite polynomial chaos expansion, which is known as a sparse polynomial approximation. Based on the convergence analysis in \cite{BachmayrCohenDeVoreEtAl2017} for elliptic PDEs with lognormal random coefficients, we establish the dimension-independent convergence rate of the sparse polynomial approximation of the optimal solution. Moreover, we present a polynomial-based sparse quadrature for the approximation of the expectation of the optimal solution and prove its dimension-independent convergence rate based on the analysis in \cite{Chen2018}. Numerical experiments demonstrate that the convergence of the sparse quadrature error is independent of the active parameter dimensions and can be much faster than that of a Monte Carlo method.
\end{abstract}

\begin{keywords}
optimal control, uncertainty quantification, sparse polynomial approximation, lognormal random coefficient, Gauss--Hermite quadrature, sparse quadrature, convergence analysis 
\end{keywords}

\begin{AMS}
65C20, 65D32, 65N12, 49J20, 93E20
\end{AMS}

\pagestyle{myheadings}
\thispagestyle{plain}
\markboth{Sparse polynomial approximation for optimal control with lognormal random coefficients}{P. Chen 
and O. Ghattas
}

\section{Introduction}

PDE-constrained optimal control or optimization problems arise in many areas of engineering and science. These problems can be generally formulated as minimization of a cost functional subject to the PDEs that model the behavior of the physical systems we seek to control. The control function can be either a distributed control defined in the physical domain, a boundary control that operates on its boundary, or a shape control that seeks to optimize the shape or geometry of the system. The cost functional often involves two terms. The first depends on the state of the system and reflects the control objective to be optimized. The second is a regularization or penalty term that reflects the regularity or cost of the control function. Theoretical analyses of the existence and uniqueness of the optimal solutions and the numerical approximation of these optimal control problems, as well as the design of efficient computational methods have been well studied during the last few decades, see the classical books \cite{Lions1971, GlowinskiLions95, Gunzburger03, HinzePinnauUlbrichEtAl2008, Troeltzsch2010, BorziSchulz2011} and references therein.  

%In the PDE-based mathematical modeling of the physical systems, uncertainties that come from various sources such as PDE coefficients, initial or boundary conditions, computational geometries can not be avoided and become important to be taken into account for better description and prediction of the system behaviors by the mathematical models. Therefore, it is crucial to deal with uncertainty for the reliability and robustness of the optimal control. 
In many optimal control problems, the PDEs that govern system behavior are characterizing by uncertain fields representing, for example, initial or boundary conditions, heterogeneous coefficients, or geometry. In such cases, it is important to account for this uncertainty  for reliability and robustness of the the optimal control.
In recent years, optimal control  problems under uncertainty have received increased attention \cite{BorziSchulzSchillingsEtAl2010, SchillingsSchmidtSchulz2011, HouLeeManouzi2011,GunzburgerLeeLee11,RosseelWells2012,
KouriHeinkenschloosVanBloemenWaanders2012,TieslerKirbyXiuEtAl2012,ChenQuarteroniRozza2013,LassilaManzoniQuarteroniEtAl2013, ChenQuarteroni2014, KunothSchwab2013, NgWillcox2014, ChenQuarteroniRozza2016, KunothSchwab2016, KouriSurowiec2016, BennerOnwuntaStoll2016, AlexanderianPetraStadlerEtAl2017, AliUllmannHinze2017}. 
%Several aspects are in the central topics for analyzing and solving such problems, which 
Topics include the mathematical representation of the uncertainties, computational methods to solve the stochastic optimal control problems, probability or risk measures for the control objective, and stochastic or deterministic formulations of the control functions. We elaborate more on the former two aspects relevant to this work.

First, proper representation of the uncertainties, especially those with spatially varying randomness, is of primary importance not only for the analysis of mathematical properties of optimal control problems but also for the development of computational methods to solve these problems. Most work has considered a small or finite number of uniformly distributed random variables to represent the uncertainties for tractability \cite{BorziSchulzSchillingsEtAl2010, SchillingsSchmidtSchulz2011, HouLeeManouzi2011,GunzburgerLeeLee11,RosseelWells2012,
KouriHeinkenschloosVanBloemenWaanders2012,TieslerKirbyXiuEtAl2012,ChenQuarteroniRozza2013,LassilaManzoniQuarteroniEtAl2013, ChenQuarteroni2014, NgWillcox2014, ChenQuarteroniRozza2016, KouriSurowiec2016, BennerOnwuntaStoll2016}. However, in many applications, such as subsurface flow where the permeability coefficient is most often modeled as an infinite-dimensional lognormal random field \cite{HoangSchwab14, AlexanderianPetraStadlerEtAl2017, GrahamKuoNicholsEtAl15, BachmayrCohenMigliorati2016, BachmayrCohenDeVoreEtAl2017}, we need to consider more general representations of the uncertainties, e.g., in terms of countably infinite number of random variables with suitable probability distributions. 

Second,  accurate and efficient numerical approximation methods play a key role in solving PDE-constrained optimal control problems under uncertainty, especially in the case of high-dimensional uncertainty represented by a large number of random variables. Monte Carlo methods are widely used because of their straightforward and noninstrusive implementation, as well as their dimension-independent convergence. However, their $O(N^{-1/2})$ convergence with $N$ samples often results in a need for a large number of samples to achieve a certain desired accuracy. Various improvements such as quasi or multilevel Monte Carlo methods can effectively reduce the total computational cost \cite{AliUllmannHinze2017}, even though the convergence of the methods remains slow. Another important class of computational methods are the polynomial chaos-based stochastic Galerkin \cite{HouLeeManouzi2011, RosseelWells2012, LeeLee2013, KunothSchwab2016} and stochastic collocation \cite{Borzi2010, KouriHeinkenschloosVanBloemenWaanders2012, TieslerKirbyXiuEtAl2012, ChenQuarteroniRozza2013, Kouri2014} methods, which achieve fast convergence for smooth problems for a relatively moderate number of dimensions. To deal with the need to solve the governing PDEs numerous times, model reduction methods such as reduced basis or proper othogonal decomposition methods have been developed \cite{BorziWinckel2011, GunzburgerMing2011, ChenQuarteroni2014, ChenQuarteroniRozza2016, PeherstorferWillcoxGunzburger18}. Low-rank tensor methods have also been proposed to solve such problems in \cite{BennerOnwuntaStoll2016, BennerDolgovOnwuntaEtAl2016, GarreisUlbrich2017}. Recently, Taylor approximation of the objective function with respect to the random field and variance reduction have been developed \cite{AlexanderianPetraStadlerEtAl2017, ChenVillaGhattas17}, and have been demonstrated to be scalable for high-dimensional control problems with moderate uncertainty. 

%The third important topic for optimal control problems under uncertainty is on the risk measure. Different formulations of the cost functional have been studied to incorporate uncertainty into the PDE-constrained optimal control or optimization problems and to obtain robust optimal control functions. A straightforward choice is to optimize the mean of the control objective, i.e., the integration of the control objective with respect to the probability measure of the uncertain parameter \cite{BorziSchulzSchillingsEtAl2010,HouLeeManouzi2011,GunzburgerLeeLee2011,
%KouriHeinkenschloosVanBloemenWaanders2012, ChenQuarteroni2014}. However, this formulation does not control the variability of the control objective. To avoid a possible and undesirable large variation of the control objective, which may represent the risk of a system failure, one can include the variance or higher moments of the control objective in the cost functional \cite{RosseelWells2012, BennerOnwuntaStoll2016, TieslerKirbyXiuEtAl2012}. An alternative approach is use the Value-at-Risk (VaR) or conditional VaR \cite{KouriSurowiec2016}, which measure the (conditional) probability of the control objective surpassing a certain critical value. An extreme choice is robust optimization (also referred as min-max optimization), where one seeks for the control that yields the optimal value of the objective in the worst-case scenario, i.e. to optimize the extreme value of the control objective in the range of the uncertain parameter \cite{bertsimas2011theory, LassilaManzoniQuarteroniEtAl2013}. 

In this work, we consider optimal control problems constrained by elliptic PDEs with lognormal diffusion coefficients that are often used to represent signed or positive random fields. The logarithm of such coefficients can be represented by an expansion on a countably infinite number of basis functions, where the randomness is described by the random coefficients obeying i.i.d.\ standard normal distribution. This representation naturally arises from a Karhunen--Lo\`eve expansion or wavelet expansion of Gaussian random fields with the second moment \cite{HoangSchwab14, GrahamKuoNicholsEtAl15, BachmayrCohenMigliorati2016, BachmayrCohenDeVoreEtAl2017}. To characterize the smoothness of the random field, we assume that a weighted sum of the basis functions is finite with the sequence of weights decaying in a certain algebraic rate, as studied in \cite{BachmayrCohenDeVoreEtAl2017}. We then consider distributed optimal control problems constrained by elliptic PDEs with lognormal random coefficients, where the specific cost functional we consider consists of the expectation of the deviation of the PDE state from a desired state, and a regularization term on the control function that is assume to depend on the realization of the random field. With such a asetting, the optimal solution of the control problem can be obtained by solving a first order optimality system. By a reduced formulation of the optimality system, we propose to approximate its solution by sparse polynomials, or tensorized polynomials in suitable sparse index sets. More specifically, we employ the Hermite polynomial chaos expansion of the optimal solution, and truncate the expansion with $N$ terms that correspond to the $N$ largest Hermite coefficients. We also present a polynomial-based sparse quadrature, which is a sum of tensorized univariate quadrature in suitable sparse index sets, to compute statistical moments of the optimal solution, such as its expectation. For practical construction of the sparse quadrature, we present an adaptive algorithm with both a-priori and a-posteriori error indicators. The main contribution of this work is the analysis and demonstration of the convergence property of the sparse polynomial approximation and the sparse quadrature for the solution of the optimal control problems under the lognormal uncertainty. In particular, based on the analysis for elliptic PDEs with lognormal random coefficients in the recent work \cite{BachmayrCohenDeVoreEtAl2017}, we establish the dimension-independent convergence rate $O(N^{-s})$ of the sparse Hermite polynomial approximation, where $s > 0$ depends only on parametrization of the lognormal random field, and not on the dimension of the parameter space, thus overcoming the curse of dimensionality and achieving fast convergence for large $s$. We remark that a similar convergence rate is obtained in \cite{KunothSchwab2013} by truncation of Legendre polynomial chaos expansion for a sequence of uniformly distributed random variables. Moreover, for sparse quadrature in approximating the expectation of the optimal solution, we prove that its error also converges with dimension-independent convergence rate $O(N^{-s})$ with a different $s > 0$. We demonstrate the dimension-independent convergence property of the sparse quadrature by a 1025-dimensional stochastic optimal control problem.

The rest of the paper is organized as follows. In Section \ref{sec:setting}, we first present an elliptic PDE with lognormal random field and its parametrization. Then the stochastic optimal control problem is presented, for which we formulate a reduced optimality system and show the existence of a unique solution of this system as well as the finite moments of the optimal solution. In Section \ref{sec:sparsePolynormial}, the Hermite polynomial expansion and truncation are introduced to formulate the sparse polynomial approximation. The dimension-independent convergence rate of this approximation is established based on the $\ell^p$-summability of the Hermite coefficients. Based on sparse polynomial approximation, we present a sparse quadrature with an adaptive construction algorithm using both a-priori and a-posteriori error indicators
in Section \ref{sec:sparseQuadrature}. We also prove its dimension-independent convergence property under assumptions on the exactness and boundedness of the univariate quadrature. We perform numerical experiments to demonstrate the convergence property of the sparse quadrature constructed by both a-priori and a-posteriori error indicators in Section \ref{sec:numerics}. Finally, we draw conclusions and mention some further research topics in Section \ref{sec:conclusion}.

\section{Problem setting}
\label{sec:setting}
In this section, we present optimal control problems constrained by elliptic PDEs with lognormal random coefficients. For the coefficients, we consider an explicit infinite-dimensional parametrization. For the elliptic PDE-constrained optimal control problems, we present its optimality system, well-posedness, and finite moments of the optimal solution. 

\subsection{Elliptic PDEs with lognormal random coefficients}
Let $D \in \bbR^d$ ($d = 1, 2, 3$) denote an open and bounded physical domain with Lipschitz boundary $\partial D$. We consider the following elliptic PDE with suitable boundary conditions:
\beq\label{eq:ellipticPDE}
- \nabla \cdot (e^\kappa \nabla u) = f  \quad \text{ in } D,
\eeq
where $u$ is the state variable, $f$ is the source term, $e^\kappa$ is the diffusion coefficient with $\kappa$ a Gaussian random field. We refer to $e^\kappa$ as a lognormal random coefficient. We assume that the random field $\kappa$ can be represented by 
\beq\label{eq:kappa}
\kappa(x,\bsy) = \sum_{j \geq 1}y_j \kappa_j(x),
\eeq 
where $(\kappa_j)_{j \geq 1}$ in \eqref{eq:kappa} is a sequence of functions in $L^{\infty}(D)$, $\bsy = (y_j)_{j \geq 1}$ is a sequence of i.i.d.\ standard normally distributed random variables, which can be viewed as a parameter vector with the Gaussian probability measure $\gamma = \cN(0, 1)$ for each element. 
We denote the unbounded domain for the parameter vector $\bsy$ as 
\beq
Y := \bbR^\bbN,
\eeq
and consider the product measure space 
\beq
(Y, \cB(Y), \bsgamma) = (\bbR^\bbN, \cB(\bbR^\bbN),  \bsgamma),
\eeq
where $\cB(\bbR^\bbN)$ denotes the $\sigma$-algebra generated by the Borel cylinders and $\bsgamma = \prod_{j\geq 1} \gamma_j$ denotes the tensorized Gaussian probability measure. By $L^2_\bsgamma(Y)$ we denote a Hilbert space of square integrable functions with respect to the measure $\bsgamma$. The representation \eqref{eq:kappa} naturally arises from Karhunen--Lo\`eve (KL) expansion of the Gaussian random field with measure $\cN(0, \cC)$, for which 
\beq
\kappa_j := \sqrt{\lambda_j} \psi_j,
\eeq
where $(\lambda_j, \psi_j)_{j\geq 1}$ denote the eigenpairs of the covariance operator $\cC$. Alternatively, $(\kappa_j)_{j\geq 1}$ can be constructed using certain wavelet basis functions that have local support \cite{BachmayrCohenMigliorati2016}, 
\beq
\kappa_j := \psi_{\lambda},
\eeq
where $\lambda \in \triangle$ denotes a space-scale index as in \cite{BachmayrCohenDeVoreEtAl2017}.
 The smoothness of the Gaussian random field is related to smoothness of the covariance $\cC \in C^\beta(D\times D)$ for some $\beta > 0$ \cite{Charrier2012}, or related to the decay of the basis functions $||\psi_j||_{L^\infty}$ as $j \to \infty$ \cite{DashtiStuart17}. To characterize the smoothness property of $\kappa$, we make the following assumption, which covers both the KL-type and the wavelet-type representations as in \cite{BachmayrCohenDeVoreEtAl2017}. 

\begin{assumption}\label{ass:kappa}
Let $0 < p < 2$ and $q = \frac{2p}{2-p}$. Assume there exists a positive sequence $\bsrho = (\rho_j)_{j\geq 1}$ such that $(\rho_j^{-1})_{j\geq 1} \in \ell^q(\bbN)$ and such that
\beq\label{eq:rhokappa}
\sup_{x\in D} \sum_{j\geq 1} \rho_j |\kappa_j(x)| < \infty.
\eeq
\end{assumption}

\begin{proposition}\label{prop:finiteMoments}
\cite[Theorem 2.1, Remark 2.2]{BachmayrCohenDeVoreEtAl2017} 
Under Assumption \ref{ass:kappa}, the map $\bsy \to \kappa(\bsy)$ is measurable, and for any $0 \leq n < \infty $, there holds 
\beq
\bbE[\exp(n ||\kappa ||_{L^\infty(D)})] < \infty.
\eeq 
\end{proposition}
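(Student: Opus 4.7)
The plan is to (i) establish measurability by approximating $\kappa(\bsy)$ in $L^\infty(D)$ by its partial sums, and (ii) dominate $\|\kappa(\bsy)\|_{L^\infty(D)}$ by a weighted Gaussian $\ell^2$-norm whose exponential moments can be controlled via the Gaussian moment generating function and a Young-type inequality. The relevant constant is $C := \sup_{x\in D}\sum_{j\geq 1} \rho_j|\kappa_j(x)|$, which is finite by Assumption~\ref{ass:kappa}.

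The workhorse estimate is a pointwise Cauchy--Schwarz on the splitting $|y_j \kappa_j(x)| = (|y_j|/\rho_j)\,(\rho_j|\kappa_j(x)|)$:
\[
\sum_{j\geq 1}|y_j\kappa_j(x)| \;\leq\; \Bigl(\sum_{j\geq 1} \frac{y_j^2}{\rho_j^2}\Bigr)^{1/2}\Bigl(\sum_{j\geq 1}\rho_j^2|\kappa_j(x)|^2\Bigr)^{1/2} \;\leq\; C\,Z(\bsy),
\]
where $Z(\bsy) := (\sum_{j\geq 1} y_j^2/\rho_j^2)^{1/2}$ and the second step uses the elementary inequality $\sum_j \rho_j^2|\kappa_j(x)|^2 \leq (\sum_j \rho_j|\kappa_j(x)|)^2 \leq C^2$, valid because $\rho_j|\kappa_j(x)|\geq 0$. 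Taking $\sup_x$ yields the crucial bound $\|\kappa(\bsy)\|_{L^\infty(D)} \leq C\,Z(\bsy)$ for every $\bsy$.

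For the exponential moment, independence together with the chi-squared MGF $\bbE[\exp(\lambda y_j^2)] = (1-2\lambda)^{-1/2}$ for $\lambda<1/2$ give $\bbE[\exp(sZ^2)] = \prod_{j\geq 1}(1-2s/\rho_j^2)^{-1/2}$. Since $(\rho_j^{-1})\in\ell^q$ forces $\rho_j\to\infty$ and hence $\inf_j\rho_j>0$, the product is well-defined for $s>0$ small, and using $-\tfrac12\log(1-t)\leq t$ on $[0,1/2]$ its finiteness reduces to $\sum_j\rho_j^{-2}<\infty$. The elementary Young inequality $nZ\leq n^2/(4s)+sZ^2$ then delivers
\[
\bbE[\exp(n\|\kappa\|_{L^\infty(D)})]\;\leq\;\bbE[\exp(nCZ)]\;\leq\; e^{(nC)^2/(4s)}\,\bbE[\exp(sZ^2)]\;<\;\infty, \qquad n\geq 0.
\]
Measurability of $\bsy\mapsto\kappa(\bsy)$ as an $L^\infty(D)$-valued map follows because each partial sum $S_N(\cdot,\bsy):=\sum_{j=1}^N y_j\kappa_j$ is measurable by construction, and applying the same Cauchy--Schwarz bound to the tail gives $\|\kappa(\bsy)-S_N(\cdot,\bsy)\|_{L^\infty(D)}\leq C\bigl(\sum_{j>N}y_j^2/\rho_j^2\bigr)^{1/2}\to 0$ for $\bsgamma$-a.e.~$\bsy$, exhibiting $\kappa(\cdot,\bsy)$ as an almost-sure limit of measurable maps.

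The main obstacle is the requirement $\sum_j\rho_j^{-2}<\infty$: this is automatic when $q\leq 2$ (from $\ell^q\subset\ell^2$), but not when $q>2$, i.e.\ $p\in(1,2)$. In that regime I would replace Cauchy--Schwarz by a H\"older inequality with exponents $(r,r')$ tuned so that $r'\leq q$ (ensuring $\sum_j\rho_j^{-r'}<\infty$), paying a harmless $j$-uniform constant for the $r$-th absolute moments of the Gaussians and repeating the MGF/Young steps verbatim. This H\"older variant is the device employed in \cite{BachmayrCohenDeVoreEtAl2017}, whose proof the above sketch mirrors.
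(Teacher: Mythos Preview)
The paper does not supply its own proof of this proposition; it simply cites \cite[Theorem~2.1, Remark~2.2]{BachmayrCohenDeVoreEtAl2017}. Your sketch is a faithful reconstruction of that argument and is correct in the range $q\le 2$ (equivalently $0<p\le 1$), where $\sum_j\rho_j^{-2}<\infty$ holds automatically and both the chi-squared MGF bound and the almost-sure tail convergence go through as written.

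For $q>2$ your H\"older suggestion is the right direction but remains only a sketch. The phrase ``paying a harmless $j$-uniform constant for the $r$-th absolute moments of the Gaussians and repeating the MGF/Young steps verbatim'' elides the actual work: one must choose $r\le q$ so that $\sum_j\rho_j^{-r}<\infty$, verify that the infinite product $\prod_j\bbE[\exp(s|y_j|^r/\rho_j^r)]$ converges for small $s>0$ (which uses $r<2$ and the summability just mentioned), and pair this with a Young inequality of the form $nCW\le c_1(nC)^{r'}+sW^r$ rather than the quadratic one. Note also that your measurability argument, as written, relies on the same $\ell^2$ tail bound $\sum_{j>N}y_j^2/\rho_j^2\to 0$ a.s., which again presupposes $\sum_j\rho_j^{-2}<\infty$; the $q>2$ case requires the analogous H\"older tail estimate here as well.
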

%\begin{proof}
%The proof is a combination of \cite[Remark 2.2]{BachmayrCohenDeVoreEtAl2017} and \cite[Theorem 2.1]{BachmayrCohenDeVoreEtAl2017}. 
%\end{proof}

By $L^2(D)$ we denote the space of square integrable functions. Let $H^1(D) := \{v \in L^2(D): |\nabla v| \in L^2(D)\}$, and $V = H^1_0(D) := \{v \in H^1(D): v|_{\partial D} = 0\}$.  Then the weak formulation of the elliptic PDE \eqref{eq:ellipticPDE} with homogeneous Dirichlet boundary condition reads: given $\bsy \in Y$ and $f \in L^2(D)$, find $u(\bsy) \in V$ such that 
\beq\label{eq:ellipticPDEweak}
a(u(\bsy), v; \kappa(\bsy)) = (f, v) \quad \forall v \in V,
\eeq
where the bilinear form $a(\cdot, \cdot; \kappa)$ is given by 
\beq
a(w,v; \kappa(\bsy)) = \int_{D} e^{\kappa(\bsy)} \nabla w \cdot \nabla v dx, \quad \forall w, v \in V,
\eeq
and $(\cdot, \cdot)$ denotes the inner product in $L^2(D)$.
Well-posedness and finite moments properties are obtained for the elliptic problem \eqref{eq:ellipticPDEweak} in \cite{BachmayrCohenDeVoreEtAl2017}, as stated in the following theorem.

\begin{theorem}
Under Assumption \ref{ass:kappa}, for any $\bsy \in Y$, there exists a unique solution $u(\bsy)$ of problem \eqref{eq:ellipticPDEweak}. Moreover, for any $0 \leq k < \infty$, there holds
\beq
\bbE[||u||_{V}^k] = \int_Y ||u(\bsy)||_{V}^k d\bsgamma(\bsy)< \infty.
\eeq
\end{theorem}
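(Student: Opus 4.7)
The plan is to prove the theorem in two stages: first pathwise well-posedness via Lax--Milgram, and then the moment bound via the exponential integrability in Proposition~\ref{prop:finiteMoments}. The starting observation is that, by Proposition~\ref{prop:finiteMoments}, $\bbE[\exp(n\|\kappa\|_{L^\infty(D)})]$ is finite for every $n$, which in particular forces $\|\kappa(\bsy)\|_{L^\infty(D)} < \infty$ for $\bsgamma$-a.e.\ $\bsy \in Y$. For these $\bsy$ (the exceptional null set may be absorbed into the statement by the usual convention of setting $u(\bsy) := 0$ there), the coefficient $e^{\kappa(\bsy)}$ satisfies the pointwise bounds
\[
e^{-\|\kappa(\bsy)\|_{L^\infty(D)}} \leq e^{\kappa(x,\bsy)} \leq e^{\|\kappa(\bsy)\|_{L^\infty(D)}}, \quad x \in D.
\]

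For each such $\bsy$, I would verify that the bilinear form $a(\cdot,\cdot;\kappa(\bsy))$ is bounded and coercive on $V = H^1_0(D)$ equipped with $\|v\|_V = \|\nabla v\|_{L^2(D)}$. Boundedness follows from the upper bound above together with Cauchy--Schwarz, yielding $|a(w,v;\kappa(\bsy))| \leq e^{\|\kappa(\bsy)\|_{L^\infty(D)}}\|w\|_V \|v\|_V$. Coercivity follows from the lower bound: $a(v,v;\kappa(\bsy)) \geq e^{-\|\kappa(\bsy)\|_{L^\infty(D)}} \|v\|_V^2$. Since $f \in L^2(D) \hookrightarrow V^*$ via Poincar\'e, the Lax--Milgram lemma gives the existence and uniqueness of $u(\bsy) \in V$ solving \eqref{eq:ellipticPDEweak}. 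Testing the weak form with $v = u(\bsy)$ and using Poincar\'e's inequality produces the pathwise energy estimate
\[
\|u(\bsy)\|_V \leq C_P\, \|f\|_{L^2(D)}\, e^{\|\kappa(\bsy)\|_{L^\infty(D)}},
\]
where $C_P$ is the Poincar\'e constant of $D$.

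For the moment bound, I would simply raise this pathwise inequality to the $k$-th power and integrate against $\bsgamma$:
\[
\bbE[\|u\|_V^k] \leq C_P^k\, \|f\|_{L^2(D)}^k\, \bbE\bigl[\exp(k\,\|\kappa\|_{L^\infty(D)})\bigr],
\]
which is finite by Proposition~\ref{prop:finiteMoments} applied with $n = k$. Measurability of $\bsy \mapsto u(\bsy) \in V$, needed to make the integral well defined, follows from the measurability of $\bsy \mapsto \kappa(\bsy)$ in $L^\infty(D)$ combined with the continuous dependence of the Lax--Milgram solution on the coefficient in the $L^\infty$ topology (a standard argument via subtracting two weak formulations and using the coercivity bound).

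I do not expect any genuine obstacle: all the analytical difficulty has already been absorbed into Proposition~\ref{prop:finiteMoments}, so the remaining work is the routine Lax--Milgram argument plus one application of the exponential moment estimate. The only subtlety worth being careful about is the measure-zero set of $\bsy$ where the series \eqref{eq:kappa} may fail to converge in $L^\infty(D)$; as noted, this is handled by the standard convention and does not affect either the uniqueness statement (interpreted $\bsgamma$-a.e.) or the integrability claim.
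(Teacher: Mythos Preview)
Your proposal is correct and follows exactly the approach the paper adopts: the paper does not give an explicit proof of this theorem (it is quoted from \cite{BachmayrCohenDeVoreEtAl2017}), but its proof of the analogous Theorem~\ref{thm:yOC} for the optimality system proceeds by precisely the same Lax--Milgram argument combined with the exponential moment bound of Proposition~\ref{prop:finiteMoments}, including the same Poincar\'e-based a-priori estimate. Your remark about the $\bsgamma$-a.e.\ qualifier is well taken and is the standard reading of the statement.
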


\subsection{Stochastic optimal control problems} We consider an elliptic PDE with a distributed control function and homogeneous Dirichlet boundary condition as
\beq\label{eq:ellipticControl}
- \nabla \cdot (e^\kappa \nabla u ) = f + z \quad \text{ in } D,
\eeq
where $z$ denotes the control function in $L^2(D)$. Moreover, we assume that $z$ depends on the realization of $\bsy \in Y$ as in \cite{GunzburgerLeeLee11, HouLeeManouzi2011, KouriHeinkenschloosVanBloemenWaanders2012, ChenQuarteroni2014, KunothSchwab2013, ChenQuarteroniRozza2013, KunothSchwab2016, AliUllmannHinze2017}. Then the weak formulation of \eqref{eq:ellipticControl} is given by: given $\bsy \in Y$, find $u(\bsy) \in V$ such that 
\beq
a(u(\bsy), v; \kappa(\bsy)) = (f+z(\bsy), v) \quad \forall v \in V,
\eeq
where $(\cdot, \cdot)$ denotes the inner product in $L^2(D)$. 
We consider the cost functional 
\beq\label{eq:costFunctional}
J(u, z) = \frac{1}{2}\bbE[||u - u_d||_{L^2(D)}^2] + \frac{\beta}{2}\bbE[||z||_{L^2(D)}^2],
\eeq 
where the first term represents a tracking-type control objective with $u_d$ as desired state, $\beta > 0$ is a weighting parameter. Then the stochastic optimal control problem constrained by the elliptic PDE problem \eqref{eq:ellipticControl} is formulated as:
\beq\label{eq:stochasticOC}
\min_{z \in Z} J(u(z), z) \text{ such that } \eqref{eq:ellipticControl} \text{ is satisfied for every } \bsy \in Y,
\eeq 
where $Z$ denotes admissible control function space set as $Z = L_\bsgamma^2(Y, L^2(D))$, a Bochner space equivalent to the tensor product function space $L_\bsgamma^2(Y) \otimes L^2(D)$ of square integrable functions in both stochastic and physical domains.

\begin{remark}
We remark that problem \eqref{eq:stochasticOC} with a stochastic distributed control function and the cost functional \eqref{eq:costFunctional} with expectation as risk measure is a particular (maybe simplest) type of stochastic optimal control problems as considered in \cite{GunzburgerLeeLee11, HouLeeManouzi2011, KouriHeinkenschloosVanBloemenWaanders2012, ChenQuarteroni2014, KunothSchwab2013, ChenQuarteroniRozza2013, KunothSchwab2016, AliUllmannHinze2017}. More general stochastic optimal control problems include those with deterministic control functions,  boundary or shape control functions, and more general PDE models, as well as other risk measures in the cost functional \cite{SchillingsSchmidtSchulz2011, TieslerKirbyXiuEtAl2012, ChenVillaGhattas17, KouriSurowiec2016, LeeGunzburger2017}.
\end{remark}

%\subsubsection{Distributed control}
%
%\subsubsection{Neumann boundary control}
%
%\subsubsection{Dirichlet boundary control}

\subsection{Optimality system, well-posedness, and finite moments}

To derive the first order optimality system for the linear-quadratic stochastic optimal control problem \eqref{eq:stochasticOC}, with linear PDE constraint and quadratic cost functional, we use a Lagrange multiplier approach. First, we form the Lagrangian 
\beq
L(u,z,v) = J(u,z) + \bbE[(f+z,v) - a(u,v;\kappa)],
\eeq
with the adjoint variable $v \in L^2_\bsgamma(Y, V)$. Then by setting the first order variation of the Lagrangian with respect to the adjoint $v$, the state $u$, and the control $z$ to be zero, we obtain the optimality system: find $(u,v,z) \in L^2_\bsgamma(Y, V)\times L^2_\bsgamma(Y, V) \times L^2_\bsgamma(Y, L^2(D)$ such that  
\beq\label{eq:expOC}
\left\{
\begin{array}{lll}
\bbE[a(u,\tilde{u};\kappa)] &=& \bbE[ (f+z, \tilde{u})] \quad \forall \tilde{u} \in L^2_\bsgamma(Y, V),\\
\bbE[a(\tilde{v}, v; \kappa)] &=& \bbE[(\tilde{v}, u-u_d)] \quad \forall \tilde{v} \in L^2_\bsgamma(Y, V),\\
\bbE[\beta(\tilde{z}, z)] &=& \bbE[-(\tilde{z}, v)] \quad \forall \tilde{z} \in L^2_\bsgamma(Y, L^2(D)).
\end{array}
\right.
\eeq 
Then the classical theory \cite{Lions1971, HinzePinnauUlbrichEtAl2008} states that there exists a unique solution of the linear-quadratic optimal control problem \eqref{eq:stochasticOC}, which is the solution of the optimality system \eqref{eq:expOC}.
Eliminating $z$ from the first equation by the third equation, we obtain the reduced optimality problem: find $(u,v) \in L^2_\bsgamma(Y, V) \times L^2_\bsgamma(Y, V)$ such that 
\beq\label{eq:expectationOC}
\left\{
\begin{array}{lll}
\bbE[a(u,\tilde{u};\kappa) + \frac{1}{\beta} (\tilde{u}, v)]&=& \bbE[(f, \tilde{u})] \quad \forall \tilde{u} \in L^2_\bsgamma(Y, V),\\
\bbE[a(\tilde{v}, v; \kappa) - (\tilde{v}, u)] &=& \bbE[(\tilde{v}, -u_d)] \quad \forall \tilde{v} \in L^2_\bsgamma(Y, V).
\end{array}
\right.
\eeq 
Multiplying the first equation by $\beta$ and adding it to the second equation, we obtain
\beq\label{eq:expectationOCOne}
\bbE[\beta a(u, \tilde{u}; \kappa) + a(\tilde{v}, v;\kappa) + (\tilde{u}, v) - (\tilde{v}, u)] = \bbE[\beta(f,\tilde{u}) + (\tilde{v}, -u_d)],
\eeq 
holding for any $(\tilde{u}, \tilde{v}) \in L^2_\bsgamma(Y, V) \times L^2_\bsgamma(Y, V)$. 

For each $\bsy \in Y$,
we denote
\beq\label{eq:wuv}
w(\bsy) = (u(\bsy), v(\bsy)) \in W, \text{ where } W = V \times V,
\eeq
associated with the norm 
\beq\label{eq:Wnorm}
||w(\bsy)||_W = \left(\beta|| u(\bsy) ||^2_V + || v(\bsy) ||^2_V\right)^{1/2},
\eeq
where the $||\cdot||_V$-norm is specified as 
\beq\label{eq:Vnorm}
 ||v||_V = || \,|\nabla v | \, ||_{L^2(D)}, \quad \forall v \in V.
\eeq
We consider the $\bsy$-pointwise reduced optimality problem corresponding to \eqref{eq:expectationOCOne} in the weak form: given $\bsy \in Y$, find $w(\bsy) \in W$ such that
\beq\label{eq:pointwiseOC}
A(w(\bsy), \tilde{w}; \kappa(\bsy)) + B(w(\bsy), \tilde{w}) = (F, \tilde{w}) \quad \forall \tilde{w} \in W,
\eeq
where we denote $\tilde{w} = (\tilde{u}, \tilde{v})\in W$, and
\beq
A(w(\bsy), \tilde{w}; \kappa(\bsy)) = \beta a(u(\bsy), \tilde{u}; \kappa(\bsy)) + a(v(\bsy), \tilde{v}; \kappa(\bsy)),
\eeq
and
\beq
B(w(\bsy), \tilde{w}) = (v(\bsy),\tilde{u}) - (u(\bsy), \tilde{v}), \text{ and } (F, \tilde{w}) = \beta(f, \tilde{u}) - (u_d, \tilde{v}).
\eeq

\begin{theorem}\label{thm:yOC}
For any given $\bsy \in Y$ such that $||\kappa(\bsy)||_{L^\infty(D)} < \infty $, there exists a unique solution $w(\bsy) \in W$ of the problem \eqref{eq:pointwiseOC}, such that there holds the a-priori estimate
\beq\label{eq:estimateW}
||w(\bsy)||_W \leq C_P\left(\sqrt{\beta}||f||_{L^2(D)} + ||u_d||_{L^2(D)}\right) \exp(||\kappa(\bsy)||_{L^\infty(D)}),
\eeq
where $C_P$ is the Poincar\'e constant \cite[Property 2.4]{Quarteroni2013}.  Moreover, under Assumption \ref{ass:kappa}, for any $0 \leq n < \infty$, there holds 
\beq\label{eq:finiteMoments}
\bbE[||w||_W^n] = \int_Y ||w(\bsy)||_W^n d\bsgamma(\bsy)< \infty.
\eeq
\end{theorem}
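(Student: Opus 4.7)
The plan is to apply the Lax--Milgram lemma to the bilinear form $(w,\tilde{w}) \mapsto A(w,\tilde{w};\kappa(\bsy)) + B(w,\tilde{w})$ on $W \times W$ to obtain existence and uniqueness, extract \eqref{eq:estimateW} by testing against the solution itself, and then deduce \eqref{eq:finiteMoments} from Proposition \ref{prop:finiteMoments}.

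The pivotal observation is that $B$ is skew-symmetric: from $B(w,\tilde{w}) = (v,\tilde{u}) - (u,\tilde{v})$ one sees $B(\tilde{w},w) = -B(w,\tilde{w})$, whence $B(w,w) = 0$ for every $w \in W$. Coercivity of $A + B$ therefore reduces to that of $A$, and combining the pointwise lower bound $\essinf_{x\in D} e^{\kappa(x,\bsy)} \geq \exp(-||\kappa(\bsy)||_{L^\infty(D)})$ with the definition \eqref{eq:Wnorm} yields the coercivity constant $\exp(-||\kappa(\bsy)||_{L^\infty(D)})$. Continuity of $A$ follows symmetrically from the matching upper bound on $e^\kappa$, and continuity of $B$ is immediate from Cauchy--Schwarz with the Poincar\'e inequality. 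The load functional is controlled by $|(F,\tilde{w})| \leq C_P(\sqrt{\beta}\,||f||_{L^2(D)} + ||u_d||_{L^2(D)})\,||\tilde{w}||_W$, obtained by applying Poincar\'e to each term of $(F,\tilde{w}) = \beta(f,\tilde{u}) - (u_d,\tilde{v})$ and absorbing the factor $\sqrt{\beta}$ into $||\tilde{w}||_W$ via \eqref{eq:Wnorm}. With these three ingredients Lax--Milgram produces a unique $w(\bsy) \in W$.

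To extract \eqref{eq:estimateW} I test \eqref{eq:pointwiseOC} with $\tilde{w} = w(\bsy)$: the $B$-contribution vanishes by skew-symmetry, the left-hand side is bounded below by $\exp(-||\kappa(\bsy)||_{L^\infty(D)})\,||w(\bsy)||_W^2$, the right-hand side by the load estimate times $||w(\bsy)||_W$, and dividing through gives exactly the stated bound. For \eqref{eq:finiteMoments}, I raise that bound to the $n$-th power and integrate against $\bsgamma$; the only nontrivial factor is $\bbE[\exp(n\,||\kappa||_{L^\infty(D)})]$, which is finite for every $n$ under Assumption \ref{ass:kappa} by Proposition \ref{prop:finiteMoments}.

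I do not foresee a genuine obstacle in this argument. The one point worth emphasizing is the skew-symmetry of $B$: it is what allows coercivity of the coupled state-adjoint system to be inherited directly from the scalar elliptic coercivity of $A$, with no interference from the coupling terms and hence no loss in the coercivity constant.
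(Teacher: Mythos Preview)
Your proposal is correct and follows essentially the same route as the paper: verify continuity and coercivity of $A+B$ (the latter via $B(w,w)=0$ and the lower bound $e^{\kappa(\bsy)} \geq \exp(-\|\kappa(\bsy)\|_{L^\infty(D)})$), bound the load functional using Poincar\'e with the $\sqrt{\beta}$ absorbed into the $W$-norm, apply Lax--Milgram, and then invoke Proposition~\ref{prop:finiteMoments} for the moment bounds. The paper's proof is organized identically, so there is nothing to add.
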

\begin{proof}
It is straightforward to see that the bilinear form $A(\cdot, \cdot;\kappa(\bsy)) + B(\cdot, \cdot)$ is continuous. Moreover, it is coercive as $B(w,w) = 0$, $\forall w \in W$, and 
\beq\label{eq:coerciveA}
A(w, w; \kappa(\bsy)) \geq \exp(-||\kappa(\bsy)||_{L^\infty(D)}) ||w||_W^2, \quad \forall w \in W.
\eeq
Furthermore, the linear form $(F, \cdot)$ is bounded as 
\beq
\begin{split}
(F, w) &\leq \sqrt{\beta} ||f||_{L^2(D)} \sqrt{\beta}||u||_{L^2(D)} + ||u_d||_{L^2(D)} ||v||_{L^2(D)}\\
& \leq C_P \sqrt{\beta} ||f||_{L^2(D)} \sqrt{\beta}||\, |\nabla u| \,||_{L^2(D)} + C_P ||u_d||_{L^2(D)} || \, |\nabla v| \, ||_{L^2(D)} \\
& \leq C_P (\sqrt{\beta} ||f||_{L^2(D)}  + ||u_d||_{L^2(D)}) ||w||_W, \quad \forall w \in W,
\end{split}
\eeq
where we used the Poincar\'e inequality \cite[Property 2.4]{Quarteroni2013} in the second inequality. This bound, together with \eqref{eq:coerciveA}, implies the unique solution by the Lax--Milgram theorem \cite[Lemma 3.1]{Quarteroni2013}, which satisfies the a-priori estimate \eqref{eq:estimateW}.

The finite moments \eqref{eq:finiteMoments} is a result of \eqref{eq:estimateW} and Proposition \ref{prop:finiteMoments}.
\end{proof}

\begin{remark}
With such a $\bsy$-pointwise formulation \eqref{eq:pointwiseOC}, from the unique solution of \eqref{eq:pointwiseOC} for every $\bsy \in Y$ and the finite moments \eqref{eq:finiteMoments} for $n = 2$, we have that there exists a unique solution of \eqref{eq:expectationOCOne}, which is the unique solution of \eqref{eq:expectationOC},  \eqref{eq:expOC}, and \eqref{eq:stochasticOC} with the optimal control function given by $z = -v/\beta$.
\end{remark}

%\subsection{Distributed and Neumann boundary control}
%
%\subsection{Dirichlet boundary control}

\section{Sparse polynomial approximation}
\label{sec:sparsePolynormial}
In this section, we present the Hermite polynomial chaos expansion of the solution of the reduced optimality system \ref{eq:expectationOCOne}. Based on the Hermite expansion, we define a sparse polynomial approximation and prove its dimension-independent convergence rate based on the $\ell^p$-summability of the Hermite coefficients. 
\subsection{Hermite polynomial chaos}
Let $\bsnu = (\nu_1, \nu_2, \dots,)$ denote a multi-index with $\nu_j \in \bbN \cup \{0\}$ for every $j \geq 1$. Let $|\bsnu| = \sum_{j\geq 1} \nu_j$ and $||\bsnu||_{\ell^\infty} = \max_{j\geq 1}\nu_j$.
Let $\cF$ denote a multi-index set with finitely supported indices, i.e., 
\beq
\cF = \{\bsnu \in \bbN^\bbN: |\bsnu| < \infty\}. 
\eeq
Let $(H_{n})_{n \geq 0}$ denote a sequence of orthonormal Hermite polynomials, and $H_{\bsnu}$ denote a tensorized Hermite polynomial given by 
\beq
H_{\bsnu}(\bsy) = \prod_{j\geq 1} H_{\nu_j}(y_j). 
\eeq 
Then $(H_\bsnu)_{\bsnu \in \cF}$ form a complete orthonormal basis for the Hilbert space $L^2_\bsgamma(Y)$. By Theorem \ref{thm:yOC}, we have that the solution of problem \eqref{eq:expectationOCOne} satisfy  $u, v \in L^2_\bsgamma(Y, V)$. Therefore, $w = (u,v)$ admits the Hermite polynomial chase expansion
\beq\label{eq:hermite}
w(\bsy) = \sum_{\bsnu \in \cF} w_\bsnu H_\bsnu(\bsy),
\eeq
where the Hermite coefficients $w_\bsnu = (u_\bsnu, v_\bsnu)$ are given by
\beq
u_\bsnu = \int_Y u(\bsy) H_\bsnu(\bsy) d\bsgamma(\bsy) \text{ and } v_\bsnu = \int_Y v(\bsy) H_\bsnu(\bsy) d\bsgamma(\bsy), \quad \forall \bsnu \in \cF.
\eeq
Moreover, by Parseval's identity, we have
\beq
||u||_{L^2_\bsgamma(Y, V)}^2 = \sum_{\bsnu \in \cF} ||u_\bsnu||_{V}^2 \text{ and } ||v||_{L^2_\bsgamma(Y, V)}^2 = \sum_{\bsnu \in \cF} ||v_\bsnu||_{V}^2.
\eeq 
Therefore, by definition of the $||\cdot||_W$-norm in \eqref{eq:Wnorm}, we have 
\beq
||w||_{L^2_\bsgamma(Y, W)}^2 = \sum_{\bsnu \in \cF} \left( \beta ||u_\bsnu||_{V}^2 +  ||v_\bsnu||_{V}^2\right) = \sum_{\bsnu \in \cF}  ||w_\bsnu||_{W}^2
\eeq
\subsection{Sparse polynomial approximation}
Let $\Lambda \subset \cF$ denote a multi-index set with cardinality $|\Lambda| < \infty$, we define a sparse polynomial approximation of $w$ as 
\beq
S_{\Lambda} w(\bsy) = \sum_{\bsnu \in \Lambda} w_\bsnu H_\bsnu(\bsy).
\eeq
Then the approximation error can be bounded as
\beq\label{eq:hPCerror}
||w - S_\Lambda w||_{L^2_\bsgamma(Y, W)}^2 = \Big|\Big| \sum_{\bsnu \in \cF \setminus \Lambda} w_\bsnu H_\bsnu \Big|\Big|_{L^2_\bsgamma(Y, W)}^2 \leq \sum_{\bsnu \in \cF \setminus \Lambda } ||w_\bsnu||_{W}^2.
\eeq
It is evident that the sparse polynomial approximation $S_\Lambda$, with $\Lambda$ taken such that the Hermite coefficients $||w_\bsnu||_{W}$ for $\bsnu \in \Lambda$ are the $|\Lambda|$ largest among all $\bsnu \in \cF$, is the optimal approximation in ${L^2_\bsgamma(Y, W)}$-norm with $|\Lambda|$ Hermite polynomials. It therefore becomes the task of quantifying the decay rate of the residual of the coefficients \eqref{eq:hPCerror} in order to obtain the convergence rate of the sparse polynomial approximation error. To this end, we first state the dimension-independent convergence rate of the sparse polynomial approximation in the following theorem. 

\begin{theorem}\label{thm:bestN}
Under Assumption \ref{ass:kappa}, there exists a sequence of multi-index set $(\Lambda_N)_{N \geq 1} \in \cF$ with $|\Lambda_N| = N$, such that 
\beq
||w - S_{\Lambda_N} w||_{L^2_\bsgamma(Y, W)} \leq C N^{-s}, \quad s = \frac{1}{p} - \frac{1}{2},
\eeq
where the constant $C > 0$ is independent of $N$.
\end{theorem}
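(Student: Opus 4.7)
The plan is to reduce Theorem \ref{thm:bestN} to a best $N$-term approximation argument for the sequence of Hermite coefficient norms $\bsa := (\|w_\bsnu\|_W)_{\bsnu \in \cF}$. The strategy has two steps: (i) prove that $\bsa \in \ell^p(\cF)$; (ii) apply Stechkin's lemma to convert this summability into the claimed $N$-term rate in $\ell^2(\cF)$.

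For step (i), the plan is to transfer the Hermite-coefficient analysis of \cite{BachmayrCohenDeVoreEtAl2017} for the forward elliptic problem to the reduced optimality system \eqref{eq:pointwiseOC}. The main ingredients of the forward analysis are a $\bsy$-pointwise a-priori estimate of the form $\exp(\|\kappa(\bsy)\|_{L^\infty})$, an analytic extension of $\bsy \mapsto w(\bsy)$ into complex polydiscs whose radii are controlled by the weight sequence $\bsrho$ of Assumption \ref{ass:kappa}, and a Cauchy-type argument that yields weighted $L^2_\bsgamma$-bounds on Hermite coefficients. The a-priori estimate is already supplied by Theorem \ref{thm:yOC} through \eqref{eq:estimateW}, with exactly the same exponential-of-$\|\kappa\|_{L^\infty}$ scaling as in the forward problem. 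The analytic extension carries over because the bilinear form $A(\cdot,\cdot;\kappa(\bsz)) + B(\cdot,\cdot)$ retains coercivity on $W$ whenever $\Re\kappa(\bsz)$ is bounded, with lower bound $\exp(-\|\Re\kappa(\bsz)\|_{L^\infty})$; the skew-symmetric term $B$ does not affect coercivity because $B(w,w)=0$. Consequently, the same weighted summability estimates developed in \cite{BachmayrCohenDeVoreEtAl2017} apply to the optimality pair $w=(u,v)$ in the norm $\|\cdot\|_W$, yielding $(\|w_\bsnu\|_W)_{\bsnu \in \cF} \in \ell^p(\cF)$ with the exponent $p$ of Assumption \ref{ass:kappa}.

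For step (ii), let $\Lambda_N \subset \cF$ consist of the indices corresponding to the $N$ largest values of $\|w_\bsnu\|_W$. Using \eqref{eq:hPCerror} and Stechkin's lemma, which states that for $\bsa \in \ell^p(\cF)$ with $0<p<2$ the best $N$-term $\ell^2$-tail is bounded by $\|\bsa\|_{\ell^p} N^{-(1/p - 1/2)}$, one obtains
\begin{equation*}
\|w - S_{\Lambda_N} w\|_{L^2_\bsgamma(Y,W)}^2 \leq \sum_{\bsnu \notin \Lambda_N} \|w_\bsnu\|_W^2 \leq \|\bsa\|_{\ell^p}^2 \, N^{1-2/p},
\end{equation*}
which is precisely the claimed rate $N^{-2s}$ with $s = 1/p - 1/2$. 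Setting $C := \|\bsa\|_{\ell^p}$ completes the argument.

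The main obstacle is verifying in step (i) that the saddle-point-like coupling in \eqref{eq:pointwiseOC} does not spoil the complex-analytic framework of \cite{BachmayrCohenDeVoreEtAl2017}: one needs uniform lower bounds on $\Re A(\cdot,\cdot;\kappa(\bsz)) + B(\cdot,\cdot)$ over polydiscs around real $\bsy$, together with careful propagation of the factor $\sqrt{\beta}\|f\|_{L^2(D)} + \|u_d\|_{L^2(D)}$ through the complex resolvent estimates. Once this is established, the transfer of the weighted summability bound and the Stechkin step is essentially routine.
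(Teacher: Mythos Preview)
Your high-level scheme is correct and matches the paper exactly: the paper's proof invokes the bound \eqref{eq:hPCerror}, applies Stechkin's lemma with $\Lambda_N$ chosen as the indices of the $N$ largest coefficient norms, and reduces everything to $(\|w_\bsnu\|_W)_{\bsnu\in\cF}\in\ell^p(\cF)$, which is deferred to Theorem~\ref{thm:lp}. Your Step~(ii) is identical to the paper's argument, with the same constant $C=\|(\|w_\bsnu\|_W)_{\bsnu\in\cF}\|_{\ell^p}$.

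Where you diverge is in the description of Step~(i). You characterize the method of \cite{BachmayrCohenDeVoreEtAl2017} as an analytic extension into complex polydiscs followed by a Cauchy-type estimate. That is \emph{not} the technique used there, and it is not what the present paper does either. The approach in \cite{BachmayrCohenDeVoreEtAl2017}, reproduced in Section~\ref{sec:sum}, is entirely real-variable: one differentiates the variational equation to bound the partial derivatives $\partial^\bsmu w(\bsy)$ (Lemma~\ref{lem:dydwbound}), then uses a Rodrigues-type identity relating Hermite coefficients to $L^2_\bsgamma$-norms of partial derivatives (Lemma~\ref{lem:bnu}), then shows that the $\bsrho$-weighted sum $\sum_{\|\bsmu\|_{\ell^\infty}\le r}\frac{\bsrho^{2\bsmu}}{\bsmu!}\int_Y\|\partial^\bsmu w\|_W^2\,d\bsgamma$ is finite (Lemma~\ref{lem:boundeddydw}), and finally concludes via H\"older's inequality (Theorem~\ref{thm:lp}). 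No complex extension or Cauchy integral appears. The structural observation you correctly highlight---that $B(w,w)=0$ so coercivity of $A+B$ is inherited from $A$---is exactly what makes Lemma~\ref{lem:dydwbound} and Lemma~\ref{lem:boundeddydw} go through for the coupled system, but it is used in the real-variable recursion, not in a resolvent estimate.

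Your ``polydiscs around real $\bsy$'' picture is the natural one for the Legendre/uniform case on bounded parameter domains; for Hermite expansions over $\bbR^\bbN$ the relevant complex geometry would be strips $\{|\Im z_j|<\rho_j\}$, and obtaining the sharp $\ell^p$ summability with the exponent $p$ of Assumption~\ref{ass:kappa} via that route is substantially more delicate---indeed, avoiding that route was one of the main points of \cite{BachmayrCohenDeVoreEtAl2017}. So while your reduction to $\ell^p$-summability is correct, the mechanism you outline to establish it should be replaced by the partial-derivative argument actually carried out in Section~\ref{sec:sum}.
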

\begin{proof}
Note that the sparse polynomial approximation error is bounded by \eqref{eq:hPCerror}. By Stechkin's lemma \cite{CohenDevoreSchwab2011}, taking $\Lambda_N$ with the $N$ elements $\bsnu \in \Lambda_N$ corresponding to the $N$ largest $||w_\bsnu||_W$ among all $\bsnu \in \cF$, there holds
\beq
\left(\sum_{\bsnu \in \cF \setminus \Lambda_N } ||w_\bsnu||_{W}^2\right)^{1/2} \leq \left(\sum_{\bsnu \in \cF} ||w_\bsnu||_{W}^p \right)^{1/p} N^{-s}, \quad s = \frac{1}{p} - \frac{1}{2},
\eeq
which concludes with $C = ||(||w_\bsnu||_{W})_{\bsnu \in \cF}||_{\ell^p(\cF)}$ if the Hermite coefficients $(||w_\bsnu||_{W})_{\bsnu \in \cF} \in \ell^p(\cF)$ under Assumption \ref{ass:kappa}. This is the result of Theorem \ref{thm:lp}.
\end{proof}

\subsection{$\ell^p(\cF)$-summability}
\label{sec:sum}
In this section, we study the $\ell^p(\cF)$-summability of the Hermite coefficients of the solution of the  reduced optimality problem \eqref{eq:pointwiseOC}, 
%by a weighted $\ell^2$-integrability of the partial derivatives of the solution
using similar arguments in \cite{BachmayrCohenDeVoreEtAl2017} for the elliptic PDE \eqref{eq:ellipticPDEweak} with a lognormal random coefficient. In particular, we need to bound the partial derivatives of the optimal solution and their weighted integrals.

By $\partial^\bsnu w$ we denote the $\bsnu$-th order partial derivative defined as 
\beq
\partial^\bsnu w := \Bigg( \prod_{j\geq 1} \partial^{\nu_j}_{y_j}\Bigg) w, \quad \forall \bsnu \in \cF.
\eeq 
We use the combinatorial notation 
\beq
{{\bsnu}\choose{\bsmu}} := 
{{\bsnu}\choose{\bsmu}}
= 
\prod_{j\geq 1}
{{\nu_j}\choose{\mu_j}}, \quad \bsnu, \bsmu \in \cF,
\eeq
with the convention 
\beq
{{n}\choose{m}}
:= 0, \quad \text{ if } \quad m > n.
\eeq
Let $\bskappa := (\kappa_j)_{j\geq 1}$ denote the sequence of basis functions of \eqref{eq:kappa}, we denote 
\beq
\bskappa^\bsnu := \prod_{j\geq 1} \kappa_j^{\nu_j}, \quad \forall \bsnu \in \cF. 
\eeq 
Moreover, for two indices $\bsnu, \bsmu \in \cF$, by $ \bsnu \preceq \bsmu$ we mean $\nu_j \leq \mu_j$ for all $j\geq 1$. We define the multi-index set $E_\bsmu$ for $\bsmu \in \cF$ as 
\beq
E_\bsmu = \{ \bsnu \in \cF: \bsnu \preceq \bsmu \text{ and } \bsnu \neq \bsmu \}, \quad \forall \bsmu \in \cF. 
\eeq

\begin{lemma}
\label{lem:dydwbound}
For any $\bsy \in Y$ such that $||\kappa(\bsy)||_{L^\infty(D)} < \infty $, for any $\bsmu \in \cF$ and $\bsmu \neq \bsnull$, there exists a unique partial derivative $\partial^\bsmu w(\bsy) \in W$ such that
\beq\label{eq:dydw}
\begin{split}
& A(\partial^\bsmu w(\bsy), \tilde{w}; \kappa(\bsy)) + B(\partial^\bsmu w(\bsy), \tilde{w}) \\
&= -\sum_{\bsnu \in E_\bsmu} 
{{\bsmu}\choose{\bsnu}}
\int_D \bskappa^{\bsmu-\bsnu} e^{\kappa(\bsy)} \nabla \partial^\bsnu w(\bsy) \cdot \nabla \tilde{w}
dx, \quad \forall \tilde{w} \in W,
\end{split}
\eeq
where we denote  
%where by $A_\bsmu^\bsnu(\partial^\bsnu w(\bsy), \tilde{w};\kappa(\bsy))$, $\bsnu \in E_\bsmu$, we denote 
%\beq
%A_\bsmu^\bsnu(\partial^\bsnu w(\bsy), \tilde{w};\kappa(\bsy)) = \int_D \bskappa^{\bsmu-\bsnu} e^{\kappa(\bsy)} \nabla \partial^\bsnu w(\bsy) \cdot \nabla \tilde{w}
%dx.
%\eeq
%with 
\beq
\nabla \partial^\bsnu w(\bsy) \cdot \nabla \tilde{w} := \beta \nabla \partial^\bsnu u(\bsy) \cdot \nabla \tilde{u} + \nabla \partial^\bsnu v(\bsy) \cdot \nabla \tilde{v}.
\eeq
Moreover, there holds the bounds 
\beq\label{eq:dydwbound}
||\partial^\bsmu w(\bsy)||_W \leq C_\bsmu \exp\left(2|\bsmu| \, ||\kappa(\bsy)||_{L^\infty(D)}\right), \quad \forall \bsmu \in \cF, 
\eeq
where the constant $C_\bsmu$ depends on $\bsmu$, $f$, $u_d$, and $(\kappa_j)_{j\geq 1}$, but not on $\bsy$. 
\end{lemma}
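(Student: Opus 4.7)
The plan is to differentiate the pointwise variational identity \eqref{eq:pointwiseOC} termwise in $\bsy$ and then proceed by induction on $|\bsmu|$, using Lax--Milgram for existence and uniqueness at each level and the coercivity estimate \eqref{eq:coerciveA} for the growth bound.

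The first step is to derive \eqref{eq:dydw}. Since $\kappa(\bsy) = \sum_{j \geq 1} y_j \kappa_j(x)$ is affine in $\bsy$, a direct computation shows that $\partial^\bsalpha e^{\kappa(\bsy)} = \bskappa^\bsalpha e^{\kappa(\bsy)}$ for every $\bsalpha \in \cF$. Applying $\partial^\bsmu$ to \eqref{eq:pointwiseOC}, noting that $B(\cdot,\cdot)$ is independent of $\bsy$ and that $(F,\tilde w)$ is independent of $\bsy$ (so its derivative vanishes when $\bsmu \neq \bszero$), and invoking the Leibniz rule inside the integrals defining $A$ gives
\[
\sum_{\bsnu \preceq \bsmu} \binom{\bsmu}{\bsnu} \int_D \bskappa^{\bsmu-\bsnu} e^{\kappa(\bsy)}\, \nabla \partial^\bsnu w \cdot \nabla \tilde w\, dx + B(\partial^\bsmu w,\tilde w) = 0 .
\]
Isolating the $\bsnu=\bsmu$ summand, which equals $A(\partial^\bsmu w, \tilde w;\kappa(\bsy))$, and transferring the remaining terms to the right produces exactly \eqref{eq:dydw}.

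The second step is the induction on $|\bsmu|$. The base case $\bsmu=\bszero$ is Theorem \ref{thm:yOC}. For the inductive step, assume $\partial^\bsnu w(\bsy) \in W$ has been constructed for every $\bsnu \in E_\bsmu$ with the claimed growth. The left-hand side of \eqref{eq:dydw} is the same continuous coercive bilinear form used in Theorem \ref{thm:yOC}, and the right-hand side defines a bounded linear functional on $W$ whose norm is controlled by
\[
\sum_{\bsnu \in E_\bsmu} \binom{\bsmu}{\bsnu} \|\bskappa^{\bsmu-\bsnu}\|_{L^\infty(D)}\, e^{\|\kappa(\bsy)\|_{L^\infty(D)}}\, \|\partial^\bsnu w(\bsy)\|_W ,
\]
so Lax--Milgram delivers a unique $\partial^\bsmu w(\bsy) \in W$ solving \eqref{eq:dydw}. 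Testing that solution against itself and using \eqref{eq:coerciveA} yields the recursion
\[
\|\partial^\bsmu w\|_W \leq e^{2\|\kappa\|_{L^\infty(D)}} \sum_{\bsnu \in E_\bsmu} \binom{\bsmu}{\bsnu} \|\bskappa^{\bsmu-\bsnu}\|_{L^\infty(D)}\, \|\partial^\bsnu w\|_W .
\]
Substituting the inductive bound $\|\partial^\bsnu w\|_W \leq C_\bsnu \exp(2|\bsnu|\|\kappa\|_{L^\infty(D)})$ for $\bsnu \in E_\bsmu$ and using $|\bsnu| \leq |\bsmu|-1$ to absorb the extra $e^{2\|\kappa\|}$ factor into $e^{2|\bsmu|\|\kappa\|}$, and then collecting the finitely many constants $\binom{\bsmu}{\bsnu}\|\bskappa^{\bsmu-\bsnu}\|_{L^\infty(D)}$ together with the Theorem \ref{thm:yOC} constant into a single $\bsy$-independent $C_\bsmu$, gives \eqref{eq:dydwbound}.

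The main obstacle I expect is not the algebraic recursion but justifying rigorously that the Lax--Milgram solution constructed at each inductive stage actually coincides with the classical partial derivative of $\bsy \mapsto w(\bsy)$. I would handle this by a difference-quotient argument applied to $\partial^{\bsmu - \bse_k} w$ in the direction $y_k$: using the continuity of $\bsy \mapsto e^{\kappa(\bsy)}$ in $L^\infty(D)$, the coercivity of $A+B$, and the already-established bounds on lower-order derivatives, the incremental quotient satisfies a finite-difference analogue of \eqref{eq:dydw} whose limit in $W$ exists and solves \eqref{eq:dydw}, and therefore must coincide with the candidate constructed above by the Lax--Milgram uniqueness.
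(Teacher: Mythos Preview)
Your proposal is correct and follows essentially the same approach as the paper. The only difference is organizational: the paper places the difference-quotient argument first (deriving \eqref{eq:dydw} for $\bsmu=\bse_j$ by subtracting \eqref{eq:pointwiseOC} at $\bsy+h\bse_j$ and $\bsy$, dividing by $h$, and passing to the limit, then iterating), whereas you derive \eqref{eq:dydw} formally via Leibniz, invoke Lax--Milgram, and defer the difference-quotient justification to the end; the inductive bound via $B(\partial^\bsmu w,\partial^\bsmu w)=0$ and \eqref{eq:coerciveA} is identical in both.
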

\begin{proof}
We proceed using the argument as in \cite[Lemma 3.1]{BachmayrCohenDeVoreEtAl2017} and \cite[Theorem 4.2]{CohenDevoreSchwab2011}. We first consider $\bsmu = \bse_j$ for some $j \geq 1$, where $(\bse_j)_i = \delta_{ij}$ denote the Kronecker sequence. Given any $\bsy \in Y$ such that $||\kappa(\bsy)||_{L^\infty(D)} < \infty $,  let $w_h(\bsy)$ denote 
\beq
w_h(\bsy) = \frac{w(\bsy+h \bse_j) - w(\bsy)}{h}.
\eeq
Subtracting \eqref{eq:pointwiseOC} at $\bsy+h\bse_j$ from it at $\bsy$ and dividing by $h$,  we obtain 
\beq\label{eq:Ah}
A(w_h(\bsy), \tilde{w}; \kappa(\bsy)) + B(w_h(\bsy), \tilde{w}) = - \int_D \frac{e^{h\kappa_j}-1}{h} e^{\kappa(\bsy)} \nabla w(\bsy+h\bse_j) \cdot \nabla \tilde{w} dx.
\eeq
%where we denote 
%\beq
%w(\bsy+h\bse_j) \cdot \nabla \tilde{w} = \beta \nabla \partial^\bsnu u(\bsy+h\bse_j) \cdot \nabla \tilde{u} + \nabla \partial^\bsnu v(\bsy+h\bse_j) \cdot \nabla \tilde{v}.
%\eeq
Taking the limit $h \to 0$, we have 
\beq
\lim_{h \to 0} \int_D \frac{e^{h\kappa_j}-1}{h} e^{\kappa(\bsy)} \nabla w(\bsy+h\bse_j) \cdot \nabla \tilde{w} dx = \int_D \kappa_j e^{\kappa(\bsy)} \nabla w(\bsy) \cdot \nabla \tilde{w} dx.
\eeq
Therefore, taking limit $h \to 0$ in \eqref{eq:Ah} concludes \eqref{eq:dydw} for $\bsmu = \bse_j$, where we denote 
\beq
\lim_{h \to 0} w_h(\bsy) = \partial^{\bse_j} w(\bsy).
\eeq 
A recursive application of this argument concludes \eqref{eq:dydw} for any other $\bsmu \in \cF$ and $\bsmu \neq \bsnull$. To show the bound \eqref{eq:dydwbound}, we first note that it holds for $\bsmu = \bsnull$ by Theorem \ref{thm:yOC} with constant $C_\bsmu = C_P\left(\sqrt{\beta}||f||_{L^2(D)} + ||u_d||_{L^2(D)}\right)$. For any other $\bsmu \in \cF$, suppose that \eqref{eq:dydwbound} holds for any $\bsnu \in E_\bsmu$. We take the test function $\tilde{w} = \partial^\bsmu w(\bsy)$, which leads to $B(\partial^\bsmu w(\bsy), \partial^\bsmu w(\bsy)) = 0$, and 
\beq\label{eq:Absmu}
A(\partial^\bsmu w(\bsy), \partial^\bsmu w(\bsy); \kappa(\bsy)) \geq \exp\left(-||\kappa(\bsy)||_{L^\infty(D)}\right) ||\partial^\bsmu w(\bsy)||_W^2.
\eeq
Moreover, for the right hand side of \eqref{eq:dydw}, we have 
\beq
\begin{split}
& \sum_{\bsnu \in E_\bsmu} 
{{\bsmu}\choose{\bsnu}}
\int_D \bskappa^{\bsmu-\bsnu} e^{\kappa(\bsy)} \nabla \partial^\bsnu w(\bsy) \cdot \nabla \partial^\bsmu w(\bsy)
dx \\
& \leq \exp\left(||\kappa(\bsy)||_{L^\infty(D)}\right) ||\partial^\bsmu w(\bsy)||_W \sum_{\bsnu \in E_\bsmu} ||\partial^\bsnu w(\bsy)||_W 
{{\bsmu}\choose{\bsnu}} 
||\bskappa||_{L^\infty(D)}^{\bsmu - \bsnu}\\
& \leq \exp\left(||\kappa(\bsy)||_{L^\infty(D)}\right) ||\partial^\bsmu w(\bsy)||_W \sum_{\bsnu \in E_\bsmu} C_\bsnu \exp\left(2|\bsnu| ||\kappa(\bsy)||_{L^\infty(D)}\right) {{\bsmu}\choose{\bsnu}} 
||\bskappa||_{L^\infty(D)}^{\bsmu - \bsnu}\\
& \leq C_\bsmu \exp\left((1+2(|\bsmu|-1))||\kappa(\bsy)||_{L^\infty(D)}\right)  ||\partial^\bsmu w(\bsy)||_W ,
\end{split}
\eeq
where we used Cauchy--Schwarz inequality in the first inequality, the bound \eqref{eq:dydwbound} for $\bsnu \in E_\bsmu$ by induction, and $|\bsnu| \leq |\bsmu|-1$ for $\bsnu \in E_\bsmu$  in the last inequality with the constant 
\beq
C_\bsmu = \sum_{\bsnu \in E_\bsmu} C_\bsnu  {{\bsmu}\choose{\bsnu}} 
||\bskappa||_{L^\infty(D)}^{\bsmu - \bsnu} < \infty,
\eeq
which is independent of $\bsy$.
This bound, together with \eqref{eq:Absmu}, imply \eqref{eq:dydwbound} for $\bsmu$. 
\end{proof}

As a result of Lemma \ref{lem:dydwbound}, the following lemma establishes the relation between the Hermite coefficients and the partial derivatives of the solution.

\begin{lemma}
\label{lem:bnu}
Under Assumption \ref{ass:kappa}, for any $r \in \bbN$, there holds \cite[Theorem 3.1]{BachmayrCohenDeVoreEtAl2017}
\beq
\sum_{||\bsmu||_{\ell^\infty} \leq r} \frac{\bsrho^{2\bsmu}}{\bsmu!} \int_Y ||\partial^\bsmu w(\bsy)||_W^2 d\bsgamma(\bsy) = \sum_{\bsnu \in \cF} b_\bsnu ||w_\bsnu||_W^2,
\eeq
where the coefficients 
\beq\label{eq:bnu}
b_\bsnu := \sum_{||\bsmu||_{\ell^\infty} \leq r} {{\bsnu}\choose{\bsmu}} \bsrho^{2\bsmu} = \prod_{j\geq 1}
\left(
\sum_{l = 0}^r
{{\nu_j}\choose{l}}
\rho_j^{2l}
\right)
, \quad \forall \bsnu \in \cF,
\eeq
which satisfies $(b_\bsnu^{-1})_{\bsnu \in \cF} \in \ell^{q/2}(\cF)$ for $r > \frac{2}{q}$ , see \cite[Lemma 5.1]{BachmayrCohenDeVoreEtAl2017}.
\end{lemma}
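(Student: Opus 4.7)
The plan is to leverage the standard derivative formula for orthonormal Hermite polynomials together with Parseval's identity, mirroring the argument of \cite[Theorem 3.1]{BachmayrCohenDeVoreEtAl2017} but carried through with values in the Hilbert space $W$ rather than scalar-valued. Recall that the orthonormal probabilists' Hermite polynomials satisfy $H_n'(y) = \sqrt{n}\,H_{n-1}(y)$, which tensorises to
\beq
\partial^\bsmu H_\bsnu(\bsy) = \sqrt{\tfrac{\bsnu!}{(\bsnu-\bsmu)!}}\, H_{\bsnu-\bsmu}(\bsy) \text{ if } \bsmu \preceq \bsnu, \text{ and } 0 \text{ otherwise}.
\eeq
First I would establish that the Hermite coefficient of $\partial^\bsmu w$ at index $\bsnu$ equals $\sqrt{(\bsnu+\bsmu)!/\bsnu!}\, w_{\bsnu+\bsmu}$. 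Since Lemma \ref{lem:dydwbound} combined with Proposition \ref{prop:finiteMoments} gives $\partial^\bsmu w \in L^2_\bsgamma(Y,W)$, this identification is obtained by integration by parts against the Gaussian density: the formal adjoint of $\partial_{y_j}$ on $L^2_\bsgamma$ is the creation operator $y_j - \partial_{y_j}$, which sends $H_\bsnu \mapsto \sqrt{\nu_j+1}\,H_{\bsnu+\bse_j}$, and iterating in each coordinate yields the stated coefficient.

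With this identification, Parseval's identity for the $W$-valued Hermite series gives
\beq
\int_Y \|\partial^\bsmu w(\bsy)\|_W^2\, d\bsgamma(\bsy) = \sum_{\bsnu \succeq \bsmu} \frac{\bsnu!}{(\bsnu-\bsmu)!}\, \|w_\bsnu\|_W^2.
\eeq
Multiplying by $\bsrho^{2\bsmu}/\bsmu!$, summing over $\|\bsmu\|_{\ell^\infty} \leq r$, and exchanging the order of summation (permissible by Tonelli's theorem as every term is non-negative) produces
\beq
\sum_{\|\bsmu\|_{\ell^\infty} \leq r} \frac{\bsrho^{2\bsmu}}{\bsmu!} \int_Y \|\partial^\bsmu w(\bsy)\|_W^2\, d\bsgamma(\bsy) = \sum_{\bsnu \in \cF} \|w_\bsnu\|_W^2 \sum_{\substack{\bsmu \preceq \bsnu \\ \|\bsmu\|_{\ell^\infty} \leq r}} \frac{\bsnu!}{\bsmu!(\bsnu-\bsmu)!}\,\bsrho^{2\bsmu},
\eeq
and recognising $\bsnu!/(\bsmu!(\bsnu-\bsmu)!) = \binom{\bsnu}{\bsmu}$, together with the convention that $\binom{\bsnu}{\bsmu}=0$ unless $\bsmu \preceq \bsnu$, shows that the inner sum is exactly $b_\bsnu$.

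For the product representation I would exploit that $\bsnu$ is finitely supported, so $\sum_{\|\bsmu\|_{\ell^\infty}\leq r}\binom{\bsnu}{\bsmu}\bsrho^{2\bsmu}$ factorises coordinatewise because $\binom{\nu_j}{l}=0$ for $l>\nu_j$, collapsing each factor to a finite sum. The $\ell^{q/2}(\cF)$-summability of $(b_\bsnu^{-1})$ for $r>2/q$ is imported verbatim from \cite[Lemma 5.1]{BachmayrCohenDeVoreEtAl2017}, since that statement depends only on $\bsrho$ and $r$ and not on the underlying PDE. The main technical obstacle is the coefficient identification in the first step: a naive term-by-term differentiation of the Hermite series requires more than $L^2_\bsgamma$-convergence, so I would rely instead on the integration-by-parts argument, where the decay of the Gaussian density combined with the pointwise bound \eqref{eq:dydwbound} and the finiteness of $\bbE[\exp(n\|\kappa\|_{L^\infty(D)})]$ from Proposition \ref{prop:finiteMoments} kills all boundary contributions and justifies the manipulation.
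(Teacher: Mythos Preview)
Your argument is correct and is precisely the proof of \cite[Theorem 3.1]{BachmayrCohenDeVoreEtAl2017}, transported verbatim to the $W$-valued setting; the paper itself gives no independent proof of this lemma but simply cites that reference (and \cite[Lemma 5.1]{BachmayrCohenDeVoreEtAl2017} for the summability), so there is nothing to compare beyond noting that you have reproduced the cited argument faithfully.
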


By Lemma \ref{lem:bnu}, a weighted summability of the Hermite coefficients of the solution is equivalent to a weighted integrability of the partial derivatives of this solution. The latter is obtained in the following lemma. 

\begin{lemma}
\label{lem:boundeddydw}
Under Assumption \ref{ass:kappa}, by a suitable rescaling of $\bsrho$ such that
\beq
\sup_{x\in D} \sum_{j\geq 1} \rho_j |\kappa_j(x)| =: K <  C_r: = \frac{\ln 2}{\sqrt{r}}
\eeq
for a given $r\in \bbN$, there holds 
\beq
\sum_{||\bsmu||_{\ell^\infty} \leq r} \frac{\bsrho^{2\bsmu}}{\bsmu!} \int_Y ||\partial^\bsmu w(\bsy)||_W^2 d\bsgamma(\bsy) < \infty.
\eeq
\end{lemma}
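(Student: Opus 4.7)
The plan is to adapt the weighted $\ell^2$-summation argument carried out in \cite[Section 4]{BachmayrCohenDeVoreEtAl2017} for the scalar elliptic problem \eqref{eq:ellipticPDEweak} to the coupled reduced optimality system \eqref{eq:pointwiseOC}. The adaptation is essentially lossless because the extra first-order skew term $B(\cdot,\cdot)$ satisfies $B(\tilde w,\tilde w)=0$ and therefore drops out whenever we test with $\tilde w=\partial^\bsmu w(\bsy)$, and the factor $\beta$ enters symmetrically into both diagonal blocks of $A$, so the bilinear form remains coercive with the same modulus $\exp(-\bar\kappa(\bsy))$ with $\bar\kappa(\bsy):=||\kappa(\bsy)||_{L^\infty(D)}$.

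First, starting from \eqref{eq:dydw}, I would test with $\tilde w=\partial^\bsmu w(\bsy)$ and use coercivity \eqref{eq:coerciveA} together with Cauchy--Schwarz on the $W$-inner product (applied separately to the two $V$-blocks, then combined) to obtain the pointwise-in-$\bsy$ recursion
\[
||\partial^\bsmu w(\bsy)||_W \;\leq\; e^{2\bar\kappa(\bsy)} \sum_{\bsnu \in E_\bsmu} {{\bsmu}\choose{\bsnu}} ||\bskappa||_{L^\infty(D)}^{\bsmu-\bsnu}\,||\partial^\bsnu w(\bsy)||_W.
\]
This is exactly the same shape of recursion that drives the analysis of the scalar elliptic problem in \cite{BachmayrCohenDeVoreEtAl2017}, so at this point the analysis becomes purely combinatorial and does not interact with the state/adjoint structure any further.

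The heart of the argument is then to square this recursion, multiply it by the weight $\bsrho^{2\bsmu}/\bsmu!$, and sum over $||\bsmu||_{\ell^\infty}\leq r$. Applying discrete Cauchy--Schwarz to the inner sum with the split of ${{\bsmu}\choose{\bsnu}}^2 / \bsmu!$ into factors $1/\bsnu!$ and $1/(\bsmu-\bsnu)!$, and using that the rescaled sequence $\bsrho\bskappa$ has $L^\infty$-norm bounded by $K$ (so that $\sum_j (\rho_j\,||\kappa_j||_{L^\infty(D)})^{\mu_j-\nu_j}/(\mu_j-\nu_j)!$-type factors collapse into $\exp$ or powers of $K$), the weighted double sum decouples into the product of a geometric factor in $K$ and a contracted copy of the same weighted sum
\[
\Sigma_r(\bsy) \;:=\; \sum_{||\bsmu||_{\ell^\infty}\leq r} \frac{\bsrho^{2\bsmu}}{\bsmu!} ||\partial^\bsmu w(\bsy)||_W^2.
\]
The threshold $C_r = \ln 2/\sqrt{r}$ is exactly calibrated so that after the exponential $e^{4\bar\kappa(\bsy)}$ from the pointwise recursion is peeled off and $K<C_r$ is used, the self-coupling coefficient is strictly less than $1$ and can be absorbed into the left side. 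What remains is a bound of $\Sigma_r(\bsy)$ by a polynomial in $e^{\bar\kappa(\bsy)}$ times $||w(\bsy)||_W^2$, which, combined with \eqref{eq:estimateW} and the finite exponential moments from Proposition \ref{prop:finiteMoments}, gives $\bbE[\Sigma_r]<\infty$ after integration against $\bsgamma$.

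The delicate point that I expect to be the main obstacle is the bookkeeping in the Cauchy--Schwarz step: the constraint $||\bsmu||_{\ell^\infty}\leq r$ (rather than the simpler cutoff $|\bsmu|\leq r$) generates a combinatorial factor of order $r$, and it must be compensated exactly by the $\sqrt{r}$ in the denominator of $C_r$, while the constant $\ln 2$ must absorb the factor $2$ arising from splitting the exponential $e^{4\bar\kappa}$ between the geometric series and the residual polynomial in $e^{\bar\kappa}$. These sharp constants are those isolated in \cite{BachmayrCohenDeVoreEtAl2017}; verifying that they transfer verbatim to our coupled system amounts to checking that the $B$-term is inert under the diagonal test, that $A$ inherits the same coercivity constant, and that the $W$-norm respects Cauchy--Schwarz block-wise, all of which I have noted above.
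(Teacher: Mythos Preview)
Your plan has a genuine gap at the absorption step. The pointwise recursion you write,
\[
\|\partial^{\bsmu} w(\bsy)\|_W \;\leq\; e^{2\bar\kappa(\bsy)} \sum_{\bsnu \in E_{\bsmu}} {{\bsmu}\choose{\bsnu}} \|\bskappa\|_{L^\infty(D)}^{\bsmu-\bsnu}\,\|\partial^{\bsnu} w(\bsy)\|_W,
\]
is correct, but after squaring, weighting, and summing, the self-coupling term carries the factor $e^{4\bar\kappa(\bsy)}$ \emph{multiplied by} $\Sigma_r(\bsy)$. That factor is not a constant: it is unbounded on the set of $\bsy$ with $\bar\kappa(\bsy)$ large, so no choice of $K<C_r$ makes the self-coupling coefficient smaller than $1$ for all $\bsy$. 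You cannot ``peel off'' $e^{4\bar\kappa(\bsy)}$ from a term that contains $\Sigma_r(\bsy)$; the closure fails pointwise in $\bsy$, and integrating first does not help because the inequality is quadratic in the unknown.

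The paper avoids this by never passing to the bare $W$-norm in the recursion. It works instead with the $\kappa$-weighted energies
\[
\sigma_k(\bsy)\;:=\;\sum_{|\bsmu|=k,\;\|\bsmu\|_{\ell^\infty}\le r}\frac{\bsrho^{2\bsmu}}{\bsmu!}\,A\bigl(\partial^{\bsmu}w(\bsy),\partial^{\bsmu}w(\bsy);\kappa(\bsy)\bigr),
\]
grouped by total degree $|\bsmu|=k$. Because $e^{\kappa(\bsy)}$ is built into $A(\cdot,\cdot;\kappa(\bsy))$, Cauchy--Schwarz on the right-hand side of \eqref{eq:dydw} yields $\sigma_k\le\sum_{l=0}^{k-1}\frac{(\sqrt{r}\,K)^{k-l}}{(k-l)!}\sigma_l$ with \emph{no} $e^{\bar\kappa}$ factor; the condition $K<C_r=\ln 2/\sqrt{r}$ then gives $\sigma_k\le\sigma_0\,\delta^k$ for some $\delta<1$, so $\sum_k\sigma_k\le C\,A(w(\bsy),w(\bsy);\kappa(\bsy))$. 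Only at this final, already-summed stage does one invoke coercivity \eqref{eq:coerciveA} and the a~priori bound \eqref{eq:estimateW}, producing a single integrable factor $\exp(c\,\bar\kappa(\bsy))$ handled by Proposition~\ref{prop:finiteMoments}. Your observation that $B$ vanishes on the diagonal and that $A$ is block-coercive is exactly what makes this transfer from the scalar case lossless; the missing ingredient is running the recursion in the $A$-energy and stratifying by $|\bsmu|$ rather than in the $W$-norm over all $\bsmu$ at once.
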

\begin{proof}
We briefly present the proof following \cite[Theorem 4.1, 4.2]{BachmayrCohenDeVoreEtAl2017}. 

For any integer $k \geq 0$,
let $\Lambda_k := \{\bsmu \in \cF: |\bsmu| = k, ||\bsmu||_{\ell^\infty} \leq r\}$. We define 
\beq
\sigma_k = \sum_{\bsmu \in \Lambda_k} \frac{\bsrho^{2\bsmu}}{\bsmu!} A(\partial^\bsmu w(\bsy), \partial^\bsmu w(\bsy); \kappa(\bsy)).
\eeq
By the equality $B(\partial^\bsmu w(\bsy), \partial^\bsmu w(\bsy)) = 0$ in \eqref{eq:dydw}, we have 
\beq
\sigma_k = -\sum_{\bsmu \in \Lambda_k } \sum_{\bsnu \in E_\bsmu} {{\bsnu}\choose{\bsmu}} \int_D \bskappa^{\bsmu-\bsnu} e^{\kappa(\bsy)} \nabla \partial^\bsnu w(\bsy) \cdot \nabla \partial^\bsmu w(\bsy)
dx.
\eeq
For the right hand side, with suitable algebraic manipulation and using Cauchy--Schwarz inequality, we obtain 
\beq
\sigma_k \leq \sum_{l = 0}^{k - 1} \frac{1}{(k - l)!} (\sqrt{r}K)^{k-l} \sigma_l, 
\eeq
which implies $\sigma_k \leq \sigma_0 \delta^k$ by induction for some $\delta < 1$ and $K/\delta < C_r$. Therefore, 
\beq\label{eq:weightedSumdydw}
\sum_{||\bsmu||_{\ell^\infty} \leq r} \frac{\bsrho^{2\bsmu}}{\bsmu!} A(\partial^\bsmu w(\bsy), \partial^\bsmu w(\bsy); \kappa(\bsy))  \leq \frac{1- \delta^{r+1}}{1-\delta} A(w(\bsy), w(\bsy); \kappa(\bsy)).
\eeq
By the coercivity of $A$ in \eqref{eq:coerciveA} and the bound \eqref{eq:estimateW}, integrating \eqref{eq:weightedSumdydw} we obtain 
\beq
\sum_{||\bsmu||_{\ell^\infty} \leq r} \frac{\bsrho^{2\bsmu}}{\bsmu!} \int_Y ||\partial^\bsmu w(\bsy)||_W^2 d\bsgamma(\bsy) \leq C \bbE[\exp(4 ||\kappa(\bsy)||_{L^\infty(D)})],
\eeq
with constant $C >0 $, which is finite by Proposition \ref{prop:finiteMoments} with $n = 4$.
\end{proof}

By Lemma \ref{lem:bnu} and \ref{lem:boundeddydw}, we have the weighted $\ell^2(\cF)$-summability of the Hermite coefficients of the solution, which leads to their $\ell^s(\cF)$-summability as stated in the following theorem. 

\begin{theorem}
\label{thm:lp}
Under Assumption \ref{ass:kappa}, for the Hermite expansion \eqref{eq:hermite} of the solution of the optimality problem \eqref{eq:pointwiseOC}, there holds 
\beq
\sum_{\bsnu \in \cF} ||w_\bsnu||_W^p < \infty. 
\eeq
\end{theorem}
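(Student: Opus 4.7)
The plan is to combine the weighted $\ell^2$-summability coming from Lemmas \ref{lem:bnu} and \ref{lem:boundeddydw} with the $\ell^{q/2}$-summability of the inverse weights, and then extract the unweighted $\ell^p$-summability by a single application of H\"older's inequality. This is the classical template for passing from Sobolev-type weighted estimates in the Hermite variables to sparsity in the chaos coefficients.

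The first step is to fix the truncation parameter. Choose an integer $r \in \bbN$ with $r > 2/q$, so that the weights $b_\bsnu$ defined in \eqref{eq:bnu} satisfy $(b_\bsnu^{-1})_{\bsnu \in \cF} \in \ell^{q/2}(\cF)$ by the last statement of Lemma \ref{lem:bnu}. Next, note that Assumption \ref{ass:kappa} is stable under a global rescaling $\bsrho \mapsto c\bsrho$ with $c \in (0,1]$: the condition $(\rho_j^{-1})_{j\geq 1}\in \ell^q(\bbN)$ is preserved and the sum in \eqref{eq:rhokappa} is simply multiplied by $c$. I would exploit this freedom to shrink $\bsrho$ so that $K := \sup_{x\in D}\sum_{j\geq 1}\rho_j|\kappa_j(x)| < C_r = \ln 2/\sqrt{r}$, putting us in the regime where Lemma \ref{lem:boundeddydw} applies. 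Lemma \ref{lem:bnu} then gives the identity
\beq\label{eq:planbnu}
\sum_{\bsnu \in \cF} b_\bsnu \, \|w_\bsnu\|_W^2 = \sum_{\|\bsmu\|_{\ell^\infty}\leq r} \frac{\bsrho^{2\bsmu}}{\bsmu!} \int_Y \|\partial^\bsmu w(\bsy)\|_W^2 \, d\bsgamma(\bsy) < \infty,
\eeq
the right-hand side being finite by Lemma \ref{lem:boundeddydw}.

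The second step is purely a H\"older argument on the index set $\cF$. Writing
\beq
\|w_\bsnu\|_W^p = \bigl(b_\bsnu\, \|w_\bsnu\|_W^2\bigr)^{p/2} \, b_\bsnu^{-p/2},
\eeq
I apply H\"older's inequality with conjugate exponents $\alpha = 2/p$ and $\beta = 2/(2-p)$. Since $p/(2-p)=q/2$ by the definition $q=2p/(2-p)$, this yields
\beq
\sum_{\bsnu\in\cF} \|w_\bsnu\|_W^p \leq \Bigl(\sum_{\bsnu\in\cF} b_\bsnu\, \|w_\bsnu\|_W^2\Bigr)^{\!p/2} \Bigl(\sum_{\bsnu\in\cF} b_\bsnu^{-q/2}\Bigr)^{\!(2-p)/2}.
\eeq
The first factor is finite by \eqref{eq:planbnu}, the second by the choice $r > 2/q$, proving the theorem.

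As for the main obstacle: essentially all of the analytic work has already been done in Lemmas \ref{lem:dydwbound}--\ref{lem:boundeddydw}, where the KKT block structure of the reduced optimality operator $A+B$ had to be handled (in particular, the skew-symmetric part $B$ drops out when testing against $\partial^\bsmu w$, and the coercivity constant of $A$ degenerates like $\exp(-\|\kappa\|_{L^\infty})$, which must be controlled by Fernique-type integrability). Given those inputs, the step above is routine; the only subtlety is verifying that the rescaling of $\bsrho$ needed to satisfy the smallness condition $K<C_r$ does not affect the conclusion, which it does not since $p$ and $q$ are fixed by Assumption \ref{ass:kappa} and are independent of the rescaling.
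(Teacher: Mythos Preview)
Your proof is correct and follows essentially the same approach as the paper: split $\|w_\bsnu\|_W^p = (b_\bsnu \|w_\bsnu\|_W^2)^{p/2} b_\bsnu^{-p/2}$ and apply H\"older with exponents $2/p$ and $2/(2-p)$, using Lemmas \ref{lem:bnu} and \ref{lem:boundeddydw} for the two factors. Your version is in fact a bit more explicit than the paper's, spelling out the choice $r>2/q$ and the rescaling of $\bsrho$ needed to invoke Lemma \ref{lem:boundeddydw}, which the paper leaves implicit.
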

\begin{proof}
By the H\"older inequality we have
\beq
\sum_{\bsnu \in \cF} ||w_\bsnu||_W^p  \leq \left(\sum_{\bsnu \in \cF} b_\bsnu ||w_\bsnu||_W^2\right)^{p/2} \left(\sum_{\bsnu \in \cF} b_\bsnu^{-q/2}\right)^{1-p/2},
\eeq
which is finite as a result of Lemma \ref{lem:bnu} and \ref{lem:boundeddydw}.
\end{proof}

\section{Sparse quadrature}
\label{sec:sparseQuadrature}
It is often interesting to compute the statistical moments of the control function, or more in general the statistical moments of the  solution of the optimal control problem \eqref{eq:pointwiseOC} or its related quantity of interest. In this section, we present a polynomial-based sparse quadrature \cite{Chen2018} for the computation of these statistical moments, in particular, the expectation of the solution. A dimension-independent convergence rate for the sparse quadrature error will be established based on that of the sparse polynomial approximation. 

\subsection{Sparse quadrature}
We first consider a univariate map $\psi(y) \in X$, where $y \in Y = \bbR$ and has a Gaussian measure $y \sim \gamma =  N(0, 1)$, and $X$ represents a separable Banach space. Our goal is to evaluate the expectation
\beq
\bbE[\psi] = \int_Y \psi(y) d\gamma(y).
\eeq  
To approximation this expectation, we introduce a sequence of univariate quadrature operators $(Q_\nu)_{\nu \geq 0}$ indexed by level $\nu \in \bbN \cup \{0\}$ as 
\beq\label{eq:uQuad}
Q_\nu[\psi] := \sum_{k = 1}^{m(\nu)} \psi(y_k) w_k,
\eeq
where $(y_k, w_k)_{k = 1}^{m(\nu)}$ are the $m(\nu)$ quadrature points and weights. We assume the number of quadrature points $m(\nu)$ satisfies $m(0) = 1$ and $m(\nu) \geq \nu+1$ for $\nu \geq 1$. Classical quadrature points include the non-nested Gauss--Hermite points, and the nested Genz--Keister points, see details in \cite{Chen2018}. 

For any $\nu \geq 0$, the univariate quadrature \eqref{eq:uQuad} can be written in a telescope sum
\beq
Q_\nu[\psi]  = \sum_{l = 0}^\nu \triangle_l[\psi], 
\eeq
where the univariate difference quadrature operator $(\triangle_l)_{l\geq 0}$ are defined as 
\beq
\triangle_l := Q_{l} - Q_{l-1}. 
\eeq
We denote $Q_{-1}[\psi] = 0$ by convention. For a multivariate map $\psi(\bsy) \in X$ where the parameter $\bsy \in Y = \bbR^\bbN$ and has tensorized Gaussian measure $\bsy \sim \bsgamma = \prod_{j\geq 1} \gamma_j$, we define the expectation of $\psi$ as the infinite-dimensional integral
\beq
\bbE[\psi] = \int_Y \psi(\bsy) d\bsgamma(\bsy).
\eeq
To approximate this expectation, based on the univariate quadrature, we define a sparse quadrature $Q_\Lambda$ associated with an index set $\Lambda \subset \cF$ as 
\beq\label{eq:sQuad}
Q_\Lambda[\psi] = \sum_{\bsnu \in \Lambda} \triangle_\bsnu[\psi],
\eeq
where the tensorized difference quadrature operator are defined as
\beq\label{eq:dQuad}
\triangle_\bsnu := \bigotimes_{j\geq 1} \triangle_{\nu_j} = \bigotimes_{j\geq 1} (Q_{\nu_j} - Q_{\nu_j-1}).
\eeq
By increasing the cardinality of the index set $\Lambda$, we hope to obtain a convergent sparse quadrature to the expectation $||\bbE[\psi] - Q_\Lambda[\psi]||_X \to 0$ and to quantify its convergence rate by taking suitable index set $\Lambda \subset \cF$, which are presented in the next section. 

\subsection{Dimension-independent convergence}
To show the convergence and quantify the convergence rate of the sparse quadrature \eqref{eq:sQuad}, we make the following assumptions on the exactness and boundedness of the univariate quadrature $(Q_\nu)_{\nu \geq 0}$. 
\begin{assumption}\label{ass:Quad}
For the univariate quadrature $(Q_\nu)_{\nu \geq 0}$ defined in \eqref{eq:uQuad}, we assume that $Q_\nu$ is exact for polynomials of degree less than or equal to $\nu$, i.e.,
\beq\label{eq:exact}
Q_\nu [\psi] = \bbE[\psi], \quad \forall \psi \in P_\nu(y)=\text{span}\{y^l, l = 0, \dots, \nu\}, \forall \nu \in \bbN\cup\{0\}.
\eeq
In particular, it holds for Hermite polynomials $Q_\nu [H_l] = \bbE[H_l]$, $l = 0, \dots, \nu$. Moreover, we assume that the quadrature values for Hermite polynomials are bounded by
\beq\label{eq:bound}
|Q_\nu[H_l]| \leq 2, \quad \forall l \geq 0, \forall \nu \in \bbN\cup \{0\}.
\eeq
\end{assumption}
\begin{remark}
It is proven in \cite{Chen2018} that the Gauss--Hermite quadrature satisfies Assumption \ref{ass:Quad} for both \eqref{eq:exact} and \eqref{eq:bound}, and the Genz--Keister quadrature satisfies \eqref{eq:exact}. In fact, numerics showed that $|Q_\nu[H_l]| \leq 1$ for both quadrature rules \cite{Chen2018}.
\end{remark}

To this end, we define a specific structure on the index set $\Lambda \subset \cF$, which is called \emph{downward closed} \cite{ChkifaCohenDeVoreEtAl2013}, or \emph{admissible} \cite{GerstnerGriebel2003}, 
\beq
\text{ if } \bsnu \in \Lambda, \text{ then } \bsmu \in \Lambda \text{ for every } \bsmu \preceq \bsnu.
\eeq

\begin{theorem}\label{thm:sQuad}
Under Assumption \ref{ass:kappa} and \ref{ass:Quad}, there exist a sequence of nested and downward closed index sets $(\Lambda_N)_{N\geq 1}$ with cardinality $|\Lambda_N| = N \geq 1$ such that sparse quadrature error for the solution of \eqref{eq:pointwiseOC} is bounded by
\beq
||\bbE[w] - Q_{\Lambda_N} [w]||_W \leq C N^{-s}, \quad s =  \frac{1}{p} - 1, 
\eeq
where the constant $C$ is independent of $N$ and the active parameter dimension. 
\end{theorem}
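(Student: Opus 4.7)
The plan is to decompose the sparse quadrature error as a tail sum over tensorized difference quadrature operators, transfer the $\ell^p(\cF)$-summability of the Hermite coefficients from Theorem \ref{thm:lp} to these differences, and then extract the rate via Stechkin's lemma in $\ell^1$ rather than in $\ell^2$.

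For a downward closed index set $\Lambda_N$, the formal telescopic identity $\bbE = \sum_{\bsnu \in \cF} \triangle_\bsnu$ splits the error as
\beq
||\bbE[w] - Q_{\Lambda_N}[w]||_W \leq \sum_{\bsnu \in \cF \setminus \Lambda_N} ||\triangle_\bsnu[w]||_W.
\eeq
Inserting the Hermite expansion \eqref{eq:hermite} and using the tensor-product structure \eqref{eq:dQuad} together with the exactness \eqref{eq:exact} and boundedness \eqref{eq:bound} from Assumption \ref{ass:Quad}, the factor $\prod_{j \geq 1} \triangle_{\nu_j}[H_{\mu_j}]$ vanishes unless $\bsmu \succeq \bsnu$ and otherwise admits a uniform bound whose constants grow only geometrically in the number of active coordinates. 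This yields
\beq
||\triangle_\bsnu[w]||_W \leq \sum_{\bsmu \succeq \bsnu} C_{\bsnu,\bsmu}\, ||w_\bsmu||_W.
\eeq

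The crux is then a weighted-summability transfer in the spirit of Section \ref{sec:sum}. Following \cite{Chen2018}, I would introduce weights $\tilde{b}_\bsnu$ analogous to the $b_\bsnu$ of \eqref{eq:bnu}, bound $\sum_\bsnu \tilde{b}_\bsnu ||\triangle_\bsnu[w]||_W^2$ by a constant multiple of $\sum_\bsmu b_\bsmu ||w_\bsmu||_W^2$ (which is finite by Lemma \ref{lem:bnu} and Lemma \ref{lem:boundeddydw}), and then combine this with the $\ell^{q/2}(\cF)$-summability of $(\tilde{b}_\bsnu^{-1})$ via H\"older's inequality, exactly as in the proof of Theorem \ref{thm:lp}, to conclude $(||\triangle_\bsnu[w]||_W)_{\bsnu \in \cF} \in \ell^p(\cF)$. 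Stechkin's lemma in the $\ell^1$ scale then delivers $\sum_{\bsnu \in \cF \setminus \Lambda_N} ||\triangle_\bsnu[w]||_W \leq C N^{-(1/p - 1)}$ for $\Lambda_N$ collecting the $N$ indices of largest magnitude.

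The remaining issue is to make $\Lambda_N$ simultaneously nested and downward closed. I would handle this by replacing $||\triangle_\bsnu[w]||_W$ by a monotone nonincreasing majorant $(c_\bsnu)_{\bsnu \in \cF}$ satisfying $c_\bsmu \leq c_\bsnu$ whenever $\bsnu \preceq \bsmu$; the index sets selecting the $N$ largest values of $c_\bsnu$ are then automatically downward closed and form a nested chain in $N$, while the $\ell^p$-summability of $(c_\bsnu)$ preserves the rate. The principal obstacle is the weighted-summability transfer in the second paragraph: constructing $\tilde{b}_\bsnu$ and controlling the geometric constants $C_{\bsnu,\bsmu}$ arising from \eqref{eq:bound} requires a rescaling of $\bsrho$ in the vein of Lemma \ref{lem:boundeddydw} so that the weighted sum stays finite uniformly in the active parameter dimension, and verifying this is the genuinely new piece beyond what Theorem \ref{thm:lp} already provides.
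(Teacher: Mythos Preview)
Your strategy is sound and would yield the stated rate, but it takes a genuinely different route from the paper's proof. The paper does \emph{not} decompose the error over the difference operators $\triangle_\bsnu$. Instead, it inserts the Hermite expansion directly into $Q_\Lambda[w]$ and uses the exactness \eqref{eq:exact} on the downward closed set to obtain
\[
\bbE[w]-Q_{\Lambda}[w]=\sum_{\bsnu\in\cF\setminus\Lambda} w_\bsnu\bigl(\bbE[H_\bsnu]-Q_\Lambda[H_\bsnu]\bigr),
\]
together with a polynomial bound $|Q_{\Lambda\cap\cR_\bsnu}[H_\bsnu]|\le c_\bsnu:=\prod_{j\ge1}(1+\nu_j)^3$ from \eqref{eq:bound}. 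This gives $\|\bbE[w]-Q_\Lambda[w]\|_W\le\sum_{\bsnu\notin\Lambda} c_\bsnu\|w_\bsnu\|_W$, a sum over Hermite modes rather than over $\triangle_\bsnu[w]$. The rate is then extracted not via Stechkin but by a multiplicative split: writing $c_\bsnu\|w_\bsnu\|_W = b_\bsnu^{-1/2+\eta}\cdot(c_\bsnu b_\bsnu^{-\eta})\cdot b_\bsnu^{1/2}\|w_\bsnu\|_W$ with the very same weights $b_\bsnu$ from \eqref{eq:bnu}, applying Cauchy--Schwarz to the last two factors (finite by Lemma \ref{lem:bnu}, Lemma \ref{lem:boundeddydw}, and \cite[Lemma 3.5]{Chen2018}), and bounding $\sup_{\bsnu\notin\Lambda_N} b_\bsnu^{-1/2+\eta}\le C N^{-(1/p-1)}$ for $\eta=q/4$. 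Downward closedness and nestedness come for free from the monotonicity of $(b_\bsnu)$.

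The practical difference is that the paper never needs your ``weighted-summability transfer'' from $\|w_\bsmu\|_W$ to $\|\triangle_\bsnu[w]\|_W$, nor the construction of new weights $\tilde b_\bsnu$ or a monotone majorant $(c_\bsnu)$: the existing $b_\bsnu$ already do all the work, and the polynomial factor $c_\bsnu$ is absorbed by $b_\bsnu^{-\eta}$ in one Cauchy--Schwarz step. Your route is closer to the classical Smolyak analysis and would require you to control the infinite product $\prod_j\triangle_{\nu_j}[H_{\mu_j}]$ uniformly in the support of $\bsmu$ (not only of $\bsnu$, since $\triangle_0[H_{\mu_j}]=Q_0[H_{\mu_j}]$ need not vanish for $\mu_j\ge1$), which is where the rescaling argument you flag becomes necessary. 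Both approaches land on the same rate, but the paper's is shorter because it reuses the $b_\bsnu$ machinery of Section \ref{sec:sum} without modification.
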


\begin{proof}
We provide the proof following the arguments in \cite{Chen2018} for the solution of the optimality system \eqref{eq:pointwiseOC}, which are presented in three steps. 

\emph{Step 1}. 
Under the exactness \eqref{eq:exact} in Assumption \ref{ass:Quad} of the univariate quadrature, it is shown \cite[Lemma 3.2]{Chen2018} that for any downward closed index set $\Lambda \subset \cF$, there holds 
\beq\label{eq:exactLambda}
Q_{\Lambda} [\psi] = \bbE[\psi], \quad \forall \psi \in P_{\Lambda}(Y) = \text{span}\{\bsy^\bsnu, \bsnu \in \Lambda\}. 
\eeq 
Moreover, under the boundedness \eqref{eq:bound}, for any $\bsnu \in \cF \setminus \bsnull$, there holds \cite[Lemma 3.2]{Chen2018}
\beq\label{eq:boundedLambda}
|Q_{\Lambda \cap \cR_\bsnu}[H_\bsnu]| \leq \prod_{j\geq 1} (1+\nu_j)^3,
\eeq
where $\cR_\bsnu := \{\bsmu \in \cF: \bsmu \preceq \bsnu\}$. 

\emph{Step 2}. For the solution of \eqref{eq:pointwiseOC}, by the Hermite expansion \eqref{eq:hermite}, we have 
\beq
Q_\Lambda [w] = \sum_{\bsnu \in \Lambda} w_\bsnu Q_\Lambda[H_\bsnu] + \sum_{\bsnu \in \cF \setminus \Lambda} w_\bsnu Q_\Lambda [H_\bsnu].
\eeq
Therefore, by the exactness \eqref{eq:exactLambda}, the sparse quadrature error can be represented by
\beq
\bbE[w] - Q_{\Lambda} [w] = \sum_{\bsnu \in \cF\setminus \Lambda} w_\bsnu (\bbE[H_\bsnu] - Q_{\Lambda}[H_\bsnu]).
\eeq
By the orthonormality of $(H_\bsnu)_{\bsnu \in \cF}$ and $H_\bsnull = 1$, we have $\bbE[H_\bsnu] = \bbE[H_\bsnu H_\bsnull]= 0$ for any $\bsnu \neq \bsnull$, which, together with \eqref{eq:boundedLambda}, lead to the bound for the sparse quadrature error 
\beq\label{eq:cnuw}
||\bbE[w]| - Q_{\Lambda} [w]|_W \leq \sum_{\bsnu \in \cF\setminus \Lambda} c_\bsnu ||w_\bsnu||_W,
\eeq
where $c_\bsnu : = \prod_{j\geq 1} (1+\nu_j)^3$ as given in \eqref{eq:bound}. 

\emph{Step 3}. By referring to the weighted $\ell^2(\cF)$ summability in \eqref{lem:bnu}, we multiply and divide by $b_\bsnu^{-1/2+\eta}$ with $\eta \geq q/4$ for the right hand side of \eqref{eq:cnuw}, which yields
\beq
\sum_{\bsnu \in \cF\setminus \Lambda} c_\bsnu ||w_\bsnu||_W \leq \sup_{\bsnu \in \cF\setminus \Lambda} b_\bsnu^{-1/2+\eta} \sum_{\bsnu \in \cF\setminus \Lambda} \frac{c_\bsnu}{b_\bsnu^\eta} b_\bsnu^{1/2} ||w_\bsnu||_W.
\eeq
Using Cauchy--Schwarz inequality, we have 
\beq
\sum_{\bsnu \in \cF\setminus \Lambda} \frac{c_\bsnu}{b_\bsnu^\eta} b_\bsnu^{1/2} ||w_\bsnu||_W \leq \left(\sum_{\bsnu \in \cF\setminus \Lambda} \left(\frac{c_\bsnu}{b_\bsnu^\eta}\right)^2 \right)^{1/2}\left(\sum_{\bsnu \in \cF\setminus \Lambda} b_\bsnu ||w_\bsnu||_W^2 \right)^{1/2},
\eeq
where the second term is finite by Lemma \ref{lem:bnu} and \ref{lem:boundeddydw}, the first term is also finite as shown in \cite[Lemma 3.5]{Chen2018}. Moreover, as shown in  \cite[Theorem 3.6]{Chen2018}, for a sequence of nested multi-index sets $(\Lambda_N)_{N\geq 1}$ with elements corresponding to the indices of the $N$ largest $b_\bsnu^{-1/2+\eta}$ among all $\bsnu \in \cF$ , there holds 
\beq
\sup_{\bsnu \in \cF\setminus \Lambda_N} b_\bsnu^{-1/2+\eta} \leq \left(\sum_{\bsnu \in \cF} b_\bsnu^{-q/2}\right)^{(2-q)/2q} (N+1)^{-s}, \quad s = \frac{1}{q} - \frac{1}{2} = \frac{1}{p} - 1,
\eeq
with $\eta = q/4$. Furthermore, it is shown that the sequence $(b_\bsnu)_{\bsnu \in \cF}$ is monotonically increasing, i.e., $b_\bsnu \leq b_\bsmu$ for $\bsnu \preceq \bsmu$, so that $(b_\bsnu^{-1/2+\eta})_{\bsnu \in \cF}$ is monotonically decreasing with $\eta < 1/2$, which implies that $(\Lambda_N)_{N\geq 1}$ can be taken downward closed.
%We order $(b_\bsnu)_{\bsnu \in \cF}$ to an increasing sequence, which is equivalent to ordering $(b_\bsnu^{-1/2+\eta})_{\bsnu \in \cF}$ to a decreasing sequence for $\eta < 1/2$. 
%We define $(\Lambda_N)_{N\geq 1}$ as a sequence of multi-index set with indices corresponding to the $N$ largest $b_\bsnu^{-1/2+\eta}$ for $\bsnu \in \cF$ and denote $d_N = b_\bsnu^{-1/2+\eta}$ with $\bsnu = \Lambda_N \setminus \Lambda_{N-1}$. Then 
%\beq
%d_{N+1} =\sup_{\bsnu \in \cF\setminus \Lambda_N} b_\bsnu^{-1/2+\eta}.
%\eeq
%By $(b_\bsnu^{-1})_{\bsnu \in \cF} \in \ell^{q/2}(\cF)$ in Lemma \ref{lem:bnu}, we have $(d_N)_{N\geq 1} \in \ell^{2q/(2-q)}$ for $\eta = q/4$, so that 
%\beq
%d_N \leq ||(d_N)_{N\geq 1}||_{\ell^{2q/(2-q)}} N^{-s}, \quad s = \frac{1}{q} - \frac{1}{2}
%\eeq 
\end{proof}

\begin{remark}
Note that the dimension-independent convergence is with respect to the number of indices $N$ in $\Lambda_N$. As the computational complexity depends on the number of PDE solves, or the number of quadrature points $N_p$ in $\Lambda_N$, which scales as $N_p \leq C N^2$ for Gauss--Hermite quadrature \cite[Proposition 18]{ErnstSprungkTamellini2018}, the corresponding sparse quadrature error is therefore bounded as 
\beq
||\bbE[w] - Q_{\Lambda_N} [w]||_W \leq C N_p^{-s/2}, \quad s = \frac{1}{p} - 1.
\eeq
This bound, however, is likely not optimal since the Gauss--Hermite quadrature $Q_\nu[\psi]$ is exact for $\psi \in P_{2m(\nu)-1} = \text{span}\{y^{l}, l = 0, \dots, 2m(\nu)-1\}$ where $m(\nu) \geq \nu + 1$. 
%Moreover, as shown in \cite{ZechSchwab2017}, for integration with respect to uniformly distributed random sequence, the convergence rate may be improved to $s = \frac{2}{p}-1$.
\end{remark}
\subsection{A-priori and a-posteriori construction algorithms}
\label{subsec:errorIndicator}
Theorem \ref{thm:sQuad} states the existence of a sequence of nested and downward closed index sets $(\Lambda_N)_{N\geq 1}$ in achieving the dimension-independent convergence rate of the sparse quadrature error. To construct such index sets, we present an adaptive algorithm that was originally developed in \cite{GerstnerGriebel2003} and use both a-priori and a-posteriori error indicators as further developed in \cite{Chen2018}. 

As the index sets $(\Lambda_N)_{N\geq 1}$ are downward closed and nested, we can start from the root $\Lambda = \{\bsnull\}$ and in each step enrich the index set by one of the indices from its reduced forward neighbor set defined as \cite{SchillingsSchwab2013, Chen2018} 
\beq\label{eq:ForwNeib}
\cN(\Lambda) = \{\bsnu \in \cF\setminus \Lambda: \bsnu-\bse_j \in \Lambda, \;\forall j \in \bbJ_\bsnu \text{ and } \nu_j = 0, \; \forall j > j(\Lambda)+1\},
\eeq
where $\bbJ_\bsnu := \{j\geq 1: \nu_j \neq 0\}$, and $j(\Lambda)$ is the smallest $j$ such that $\nu_{j+1} = 0$ for all $\bsnu \in \Lambda$. Then in each step, we select the next index $\bsnu \in \cN(\Lambda)$ according to an a-priori error indicator $b_\bsnu$ as illustrated in the proof, Step 3, of Theorem \ref{thm:sQuad}, or an a-posteriori error indicator $||\triangle_\bsnu[\psi]||_X$ as defined in \eqref{eq:dQuad}. The adaptive construction process for the sparse quadrature is presented in the following algorithm, where a maximum number of indices is prescribed as a stopping criterion. Alternatively, a prescribed tolerance for the error indicators can be likewise imposed.

\begin{algorithm}
\caption{Adaptive sparse quadrature}
\label{alg:SparseQuad}
\begin{algorithmic}[1]
\STATE{\textbf{Input: } 
%tolerance $\epsilon$, 
a maximum number of indices $N_{\text{max}}$, the map $\psi$.}
\STATE{\textbf{Output: } the downward closed index set $\Lambda_N$, quadrature $\cQ_{\Lambda_N}(\psi)$.}
\STATE{Set $N= 1$, $\Lambda_N = \{\bsnull\}$, evaluate $\psi(\bsnull)$ and set $
\cQ_{\Lambda_N}(\psi)=\psi(\bsnull)$.}
\WHILE{$N < N_{\text{max}}$}
%and $\mathcal{E}_N > \epsilon$
\STATE{Construct the reduced forward neighbor set $\cN(\Lambda_N)$ by \eqref{eq:ForwNeib}.}
\IF {using a-priori construction}
\STATE{Compute $b_{\bsnu}$ for all $\bsnu \in \cN(\Lambda_N)$ by \eqref{eq:bnu}.}
\STATE{Take $\bsnu = \argmin_{\bsmu \in \cN(\Lambda_N)} b_\bsnu$.}
\ELSIF {using a-posteriori construction}
\STATE{Compute $\triangle_\bsnu(\psi)$ for all $\bsnu \in \cN(\Lambda_N)$ by \eqref{eq:dQuad}.}
\STATE{Take $\bsnu = \argmax_{\bsmu \in \cN(\Lambda_N)} ||\triangle_{\bsmu}(\psi)||_X$.}
\ENDIF
\STATE{Enrich the index set $\Lambda_{N+1} = \Lambda_N \cup \{\bsnu\}$}.
\STATE{Set $\cQ_{\Lambda_{N+1}}(\psi) = \cQ_{\Lambda_N}(\psi) + \triangle_{\bsnu}(\psi)$.}
\STATE{Set $ N \leftarrow N + 1$.}
\ENDWHILE
\end{algorithmic}
\end{algorithm}

\begin{remark}
By arranging the sequence $(\rho_j)_{j\geq 1}$ that satisfies Assumption \ref{ass:kappa} in an increasing order with respect to $j$, the adaptive construction in the reduced forward neighbor set with the a-priori error indicator is guaranteed to achieve the convergence rate in Theorem \ref{thm:sQuad}, see more details in \cite{Chen2018}. On the other hand, the a-posteriori error indicator does not guarantee to achieve such convergence rate in theory but lead to smaller quadrature error than the a-priori error indicator in practice. 
\end{remark}

\begin{remark}\label{rmk:prioriposteriori}
In the construction by the a-priori error indicator, one only needs to compute $b_\bsnu$ in seeking the next index in the forward neighbor set $\bsnu \in \cN(\Lambda_N)$, which may greatly reduce the computation cost compared to that by the a-posteriori error indicator, which requires PDE solves to compute $\triangle_\bsnu[\psi]$ for all $\bsnu \in \cN(\Lambda_N)$.
\end{remark}

\section{Numerical experiment}
\label{sec:numerics}
In this section, we perform a numerical experiment to demonstrate the dimension-independent convergence property of the sparse quadrature for the optimal control problem with lognormal random coefficients. We consider the optimal control problem \eqref{eq:stochasticOC} in one dimensional physical domain $D = (0, 1)$ and impose homogeneous Dirichlet boundary condition for the elliptic PDE \eqref{eq:ellipticControl}. We use a finite element method with piecewise linear element in a uniform mesh with $1025$ nodes for discretization in the physical space. 
We specify the parametrization \eqref{eq:kappa} as
\beq\label{eq:kappa1025}
\kappa(x,\bsy) = \sum_{j = 1}^{\infty} y_j j^{-\alpha} \sin(\pi j x)/2, 
\eeq
where $\alpha \geq 1$. Due to the physical discretization, \eqref{eq:kappa1025} is truncated with 1025 terms, resulting in a 1025-dimensional stochastic optimal control problem.
For this parametrization, we can take $\bsrho = j^{\alpha-1-\epsilon}$ for arbitrarily small $\epsilon > 0$, so that \eqref{eq:rhokappa} is satisfied and $(\rho_j^{-1})_{j\geq 1} \in \ell^q$ for any $q > \frac{1}{\alpha - 1}$. Therefore, by Theorem \ref{thm:bestN}, the dimension-independent convergence rate $N^{-s}$ can be obtained with $s < \alpha - 1$ for the sparse polynomial approximation, and with $s < \alpha - 3/2$ for the sparse quadrature by Theorem \ref{thm:sQuad}. We generate the synthetic data $u_d$ as the solution of \eqref{eq:ellipticControl} with $z_d = \sin(\pi x)$ and $\kappa = 0$. We take the regularization parameter $\beta = 10^{-4}$. 

\begin{figure}[!htb]
\begin{center}
\includegraphics[scale=0.32]{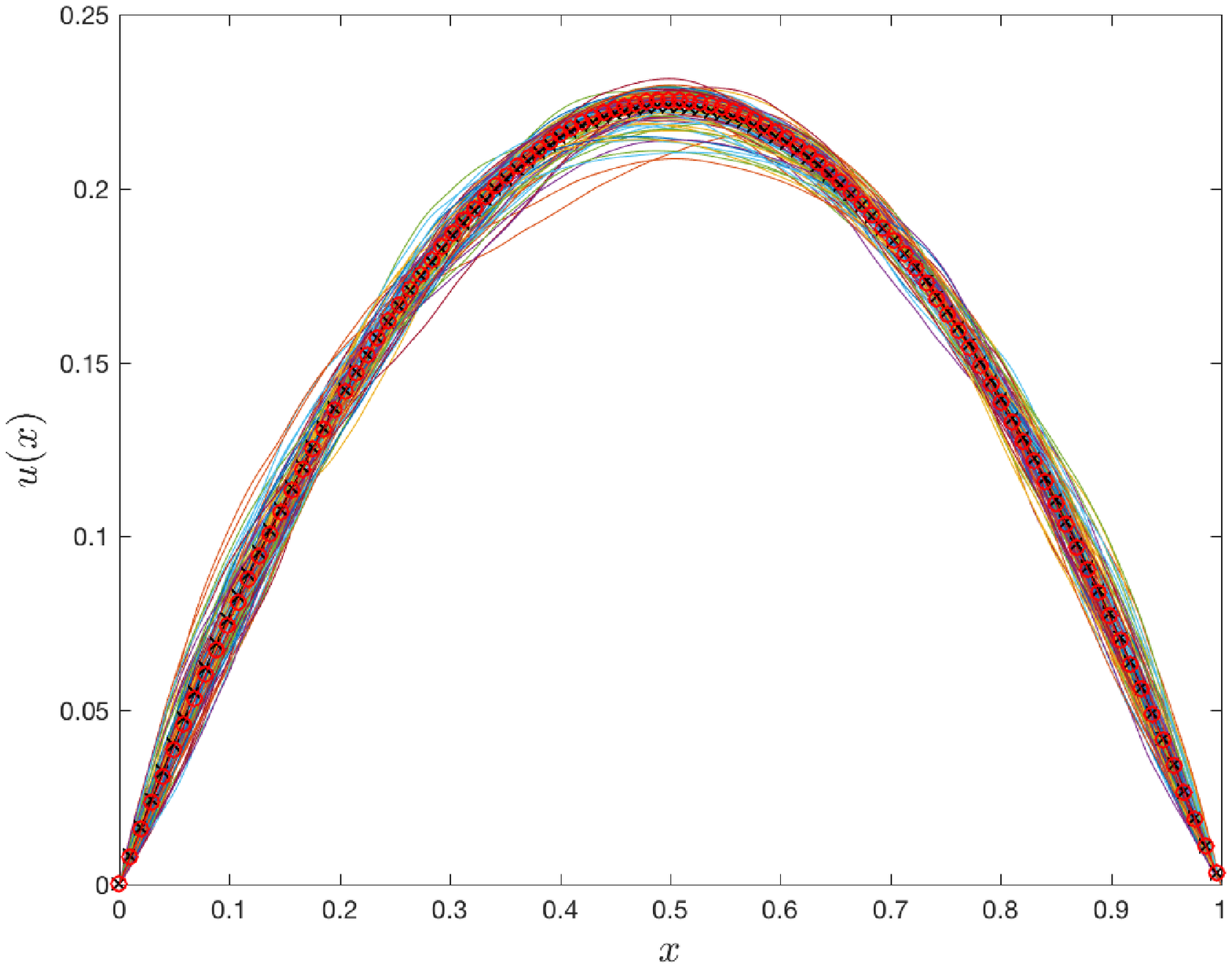}
\includegraphics[scale=0.32]{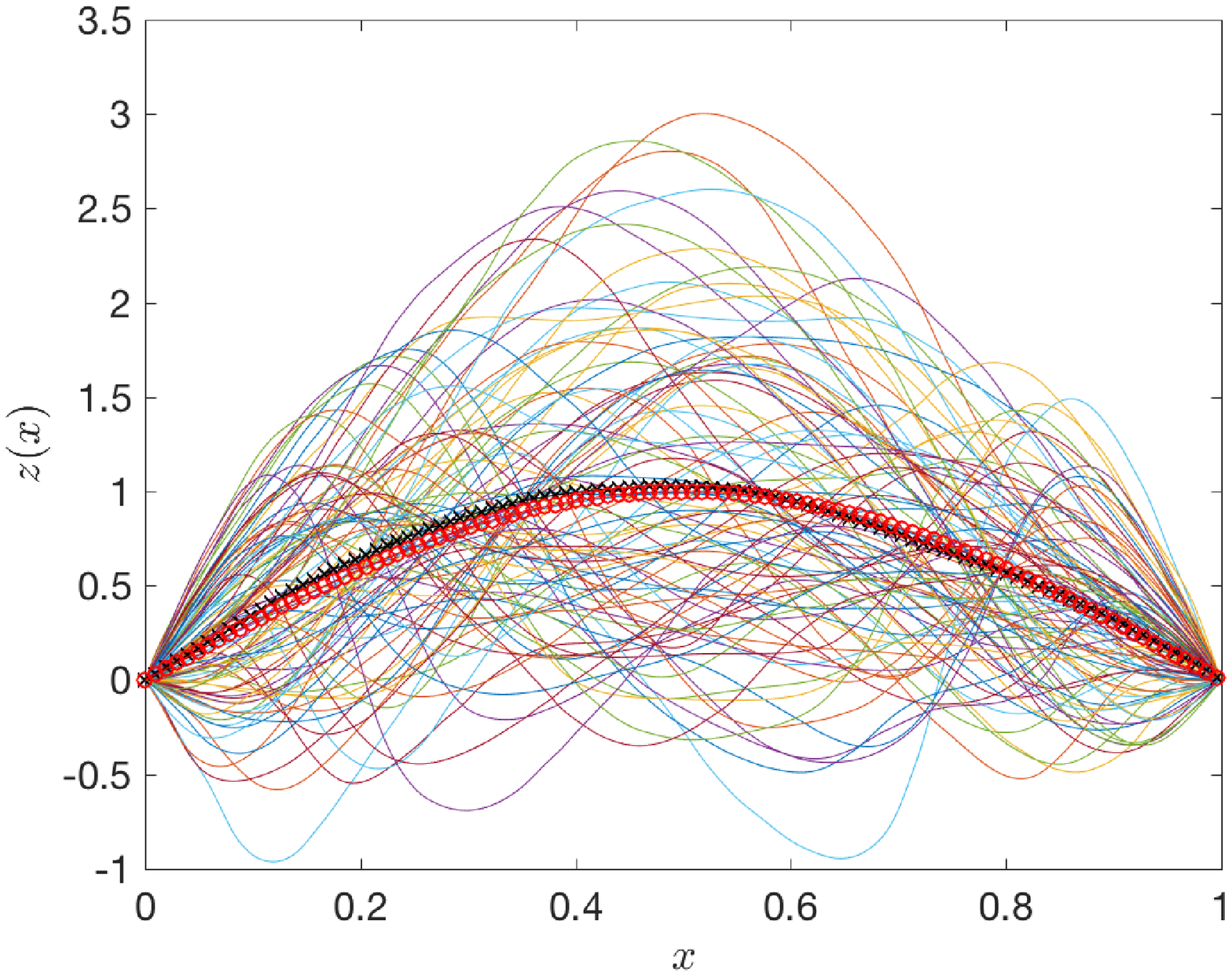}

\includegraphics[scale=0.32]{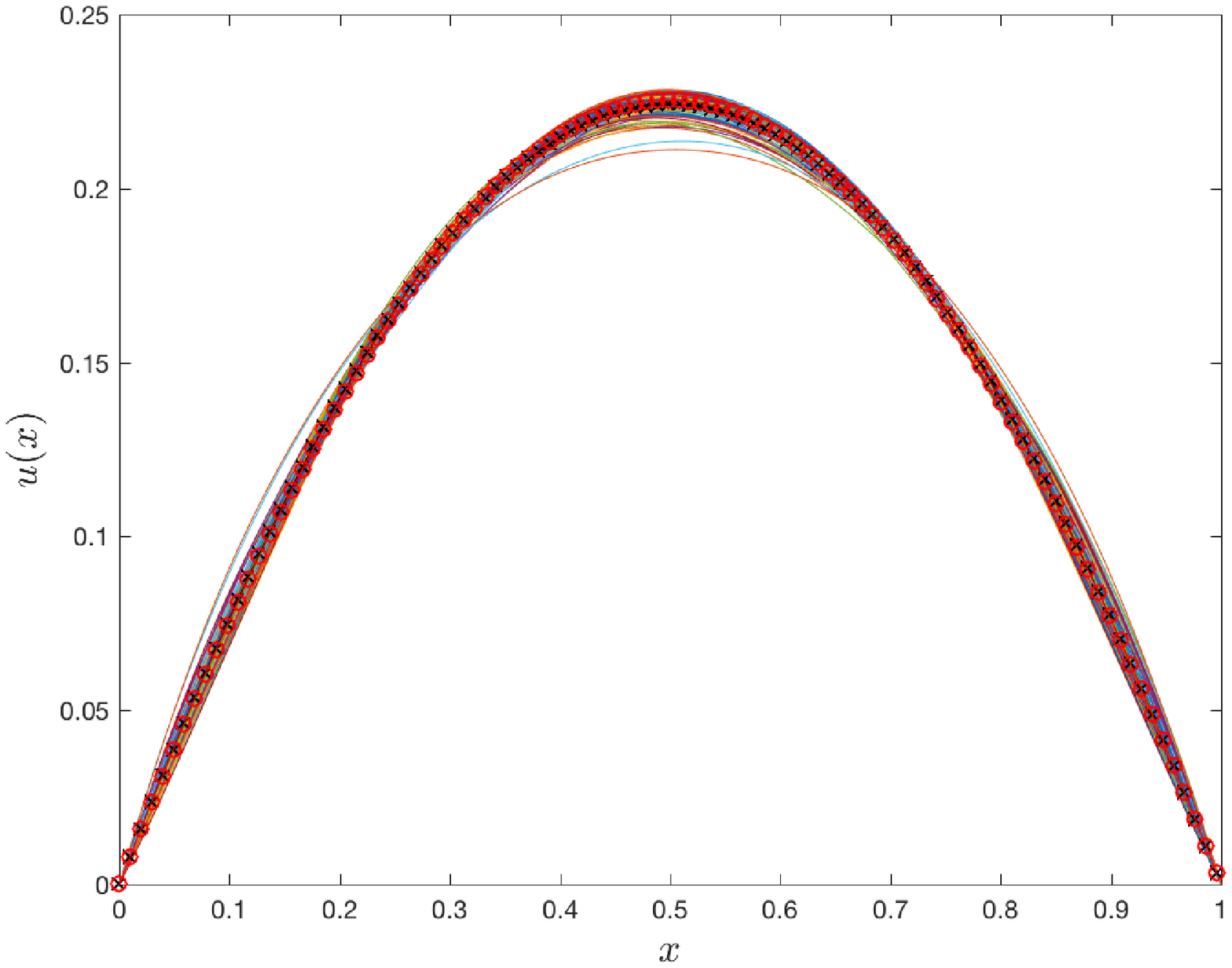}
\includegraphics[scale=0.32]{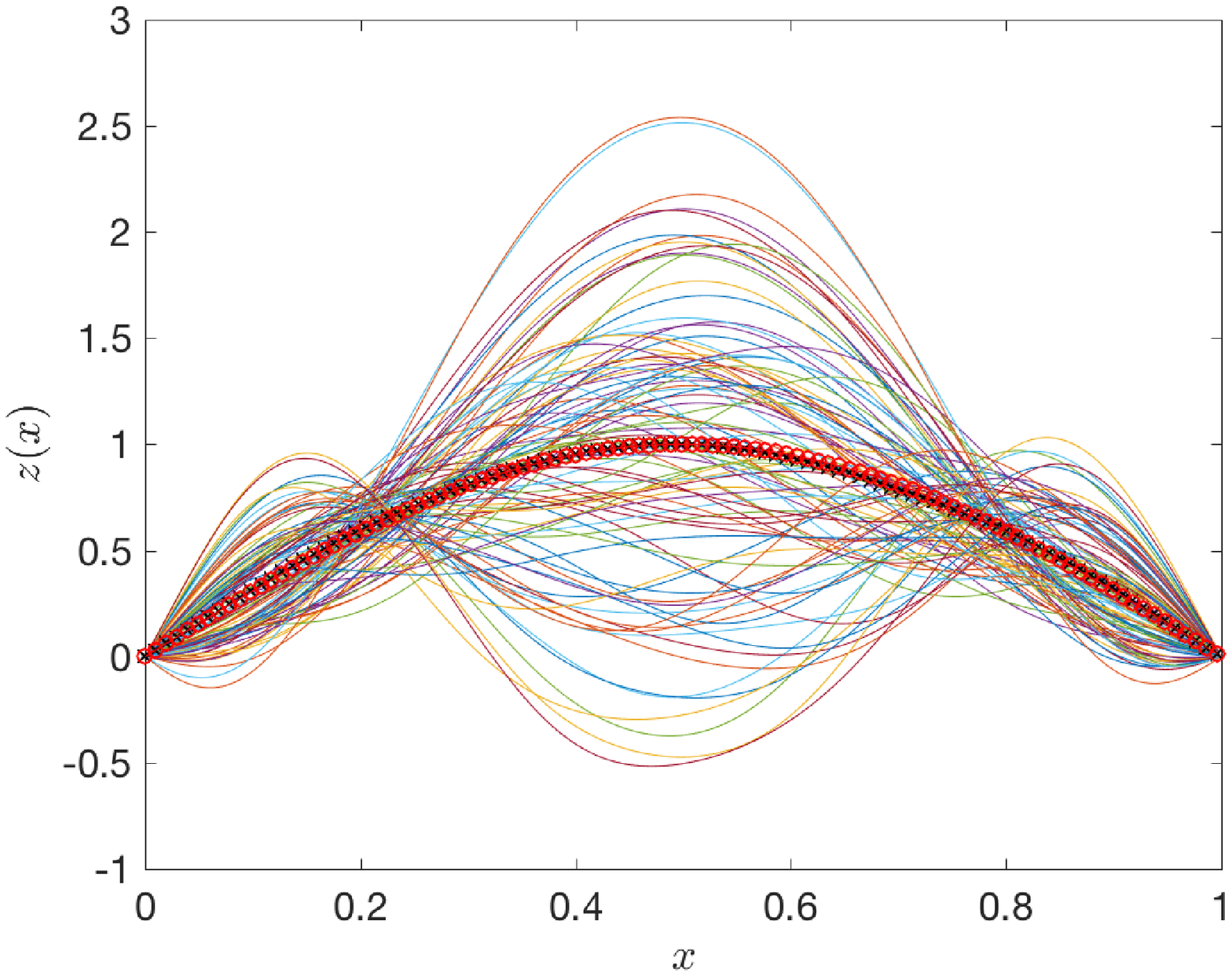}
\end{center}
\caption{The state variable $u$ (left) and the control variable $z = -v/\beta$ (right) of the optimality system \eqref{eq:pointwiseOC} at 100 random samples. The lines marked by red circle represent the synthetic data $u_d$ at the control $z_d = \sin(\pi x)$, the line marked by black cross represent the averaged state and control among the 100 realizations. $\alpha = 1$ (top), $\alpha = 2$ (bottom).
}\label{fig:controlSample}
\end{figure}

At first, we take 100 random samples of $\kappa$ with $\alpha = 1, 2$ and solve the optimality system \eqref{eq:pointwiseOC} at these samples. The realizations of the state variable $u$ and the control variable given through the adjoint variable as $z = -v/\beta$ are plotted in Fig.\ \ref{fig:controlSample}. We can observe that the realizations of the state variable are close to the synthetic data $u_d$ as expected (the difference between them is the object to minimize in the cost functional), while the realizations of the control variable are quite far from $z_d$ at different random samples. Nevertheless, the sample mean of the control variable is rather close to $z_d$ for both cases of $\alpha = 1, 2$. This observation implies that even using a stochastic control function, its mean may be used to approximate a deterministic control as argued in \cite{AliUllmannHinze2017}.

Next, we use the sparse quadrature based on univariate Gauss--Hermite quadrature rule to compute the expectation of the control variable $z(x)$ at $x = 0.5$. For the construction of the sparse quadrature, we run Algorithm \ref{alg:SparseQuad} using both a-priori and a-posteriori error indicators presented in Sec. \ref{subsec:errorIndicator}. A maximum number of quadrature points $N_{\text{max}} = 10^4$ is prescribed as the stopping criterion. We consider three different cases $\alpha = 1, 2, 3$ in the parametrization \eqref{eq:kappa1025}. 
The convergence of the sparse quadrature errors in different scenarios are shown in Fig.\ \ref{fig:convergence} with the reference values computed at the maximum number of quadrature points. 

\begin{figure}[!htb]
\begin{center}
\includegraphics[scale=0.33]{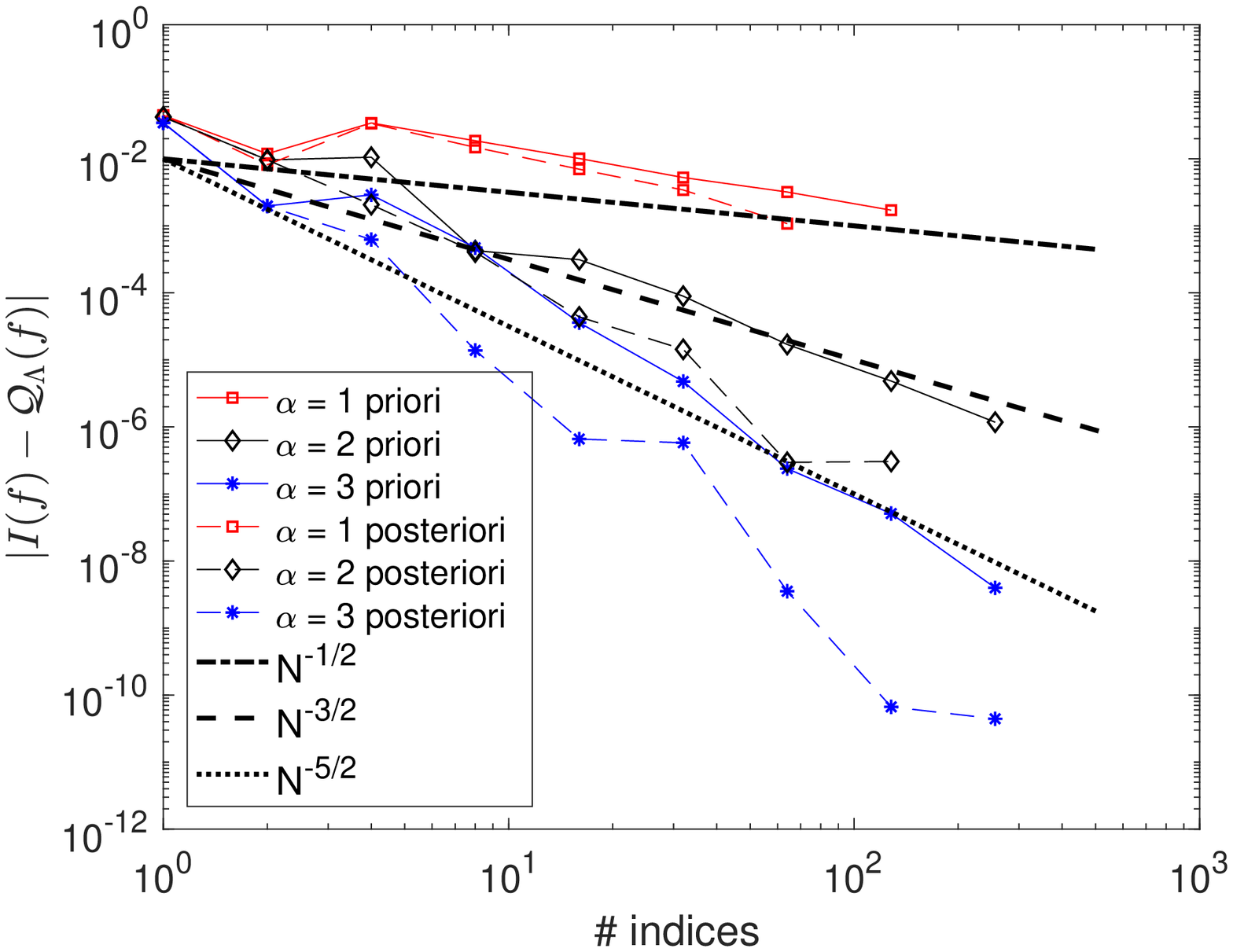}
\includegraphics[scale=0.33]{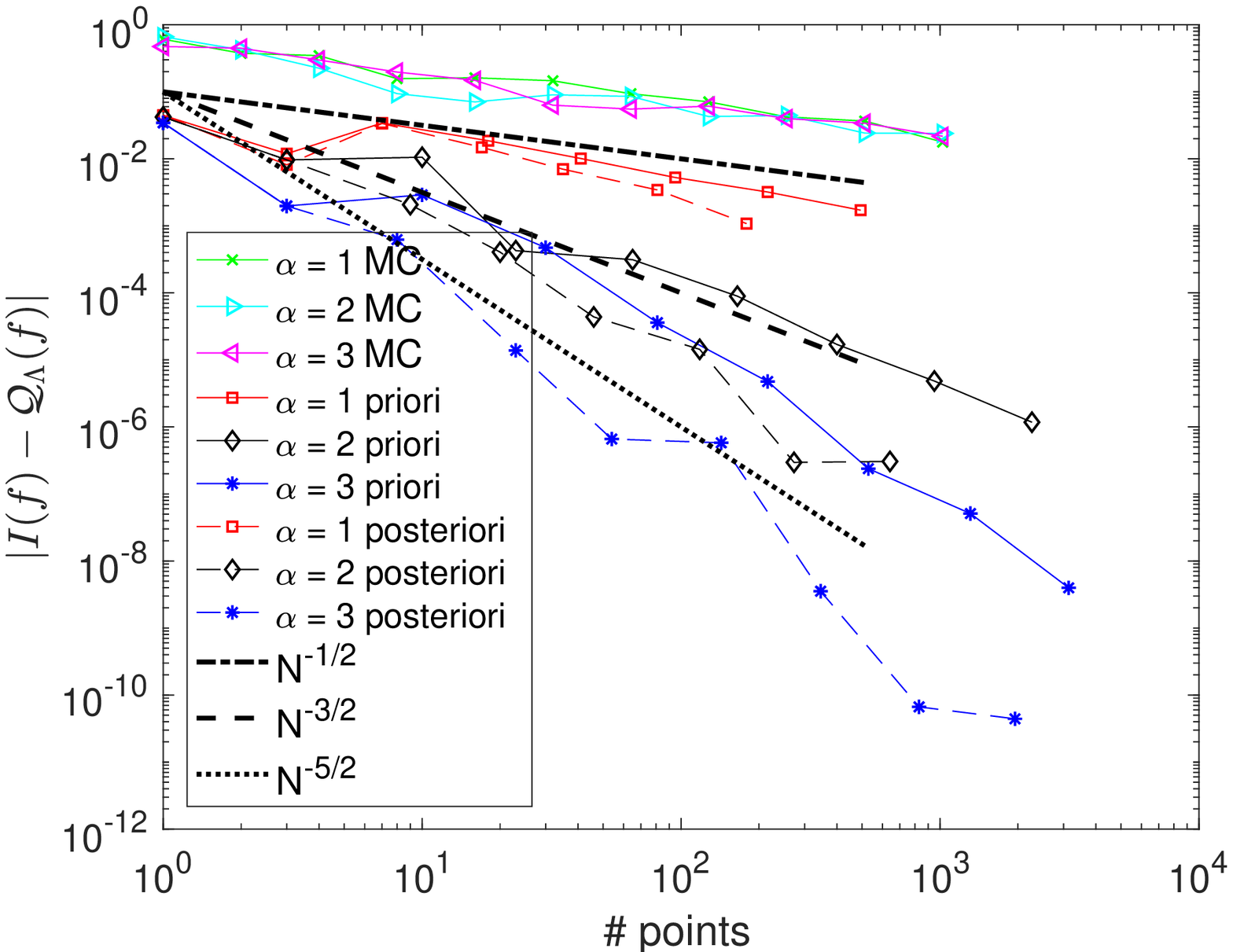}

\includegraphics[scale=0.33]{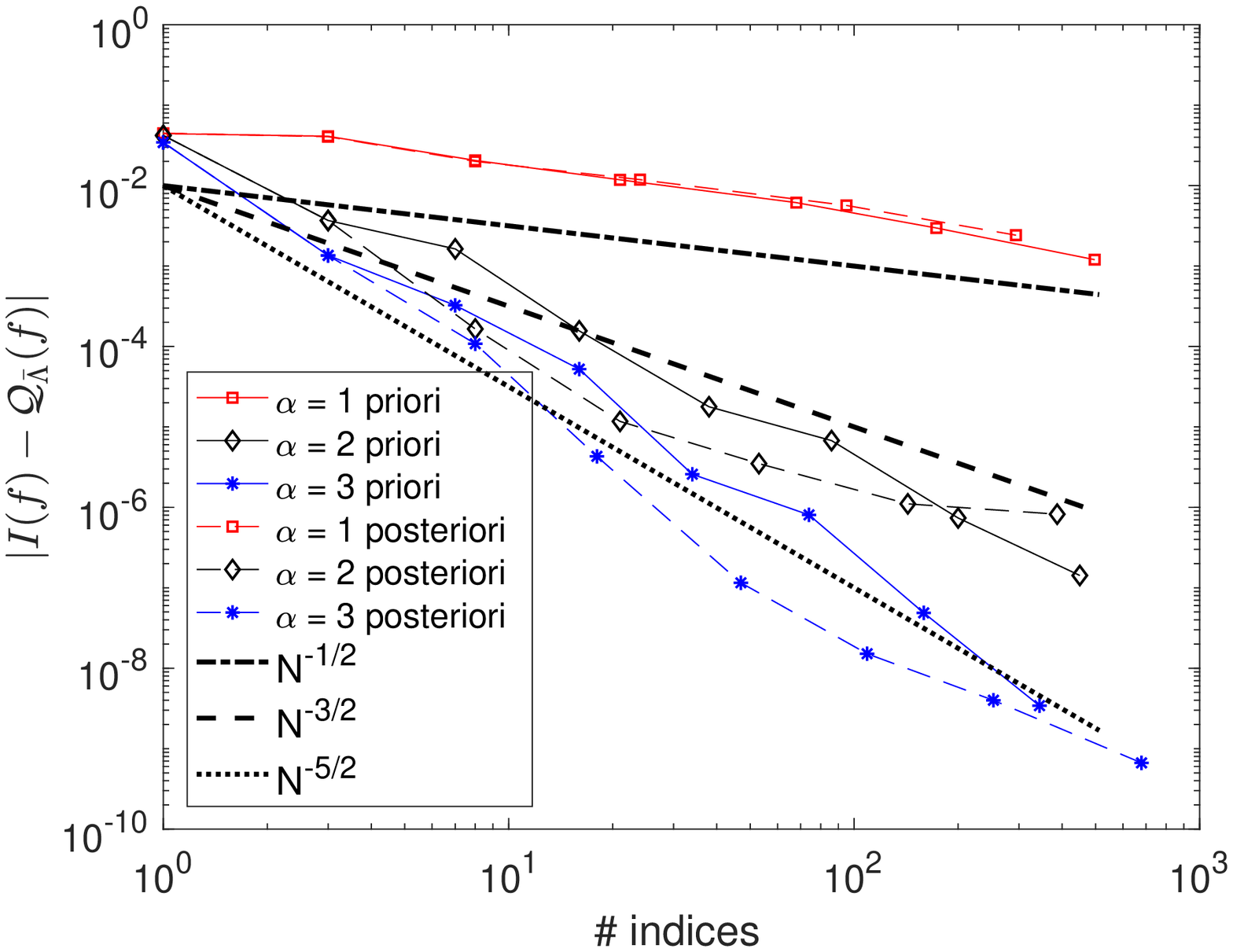}
\includegraphics[scale=0.33]{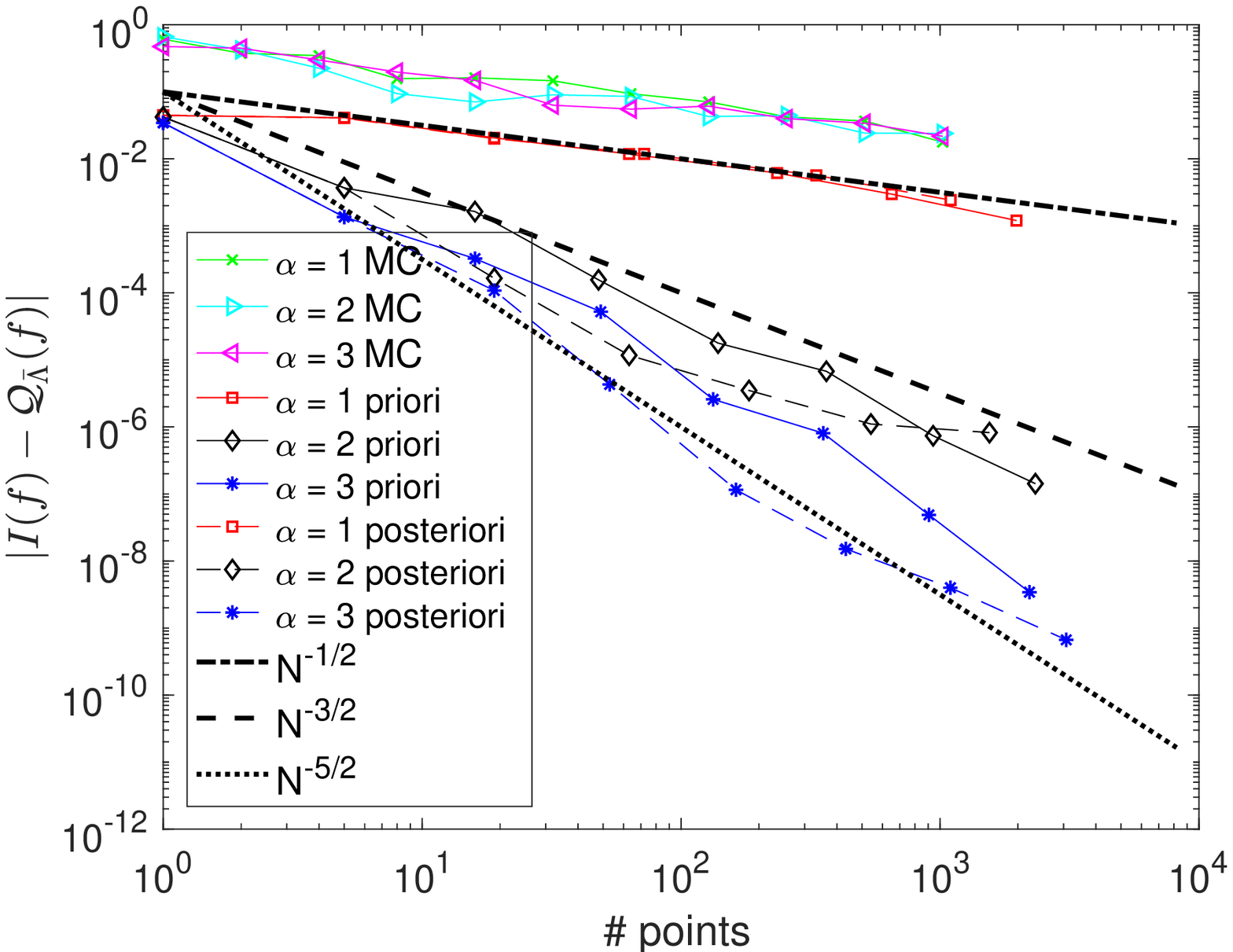}
\end{center}
\caption{Convergence of the sparse quadrature errors with respect to the number of indices (left) and the number of quadrature points or PDE solves (right). In the top figures, the convergence is shown in $\Lambda$, while in the bottom figures, the convergence is shown in $\bar{\Lambda} = \Lambda \cup \cN(\Lambda)$. In the right figures, convergence of Monte Carlo quadrature errors is also shown. $\alpha = 1, 2, 3$.}\label{fig:convergence}
\end{figure}

First, from the top-left part of Fig.\ \ref{fig:convergence}, we can see that the sparse quadrature errors converge with an asymptotic rate of $N^{-s}$ with respect to the number of indices $N$ in the constructed index set $\Lambda$. Varying $\alpha = 1, 2, 3$, we can observe the rate $s > \alpha - 1/2$ for both the a-priori and the a-posteriori constructions, which is larger than $\alpha - 3/2$ as predicted by Theorem \ref{thm:sQuad}. This larger convergence rate has also been observed in \cite{Chen2018} for Gaussian measure and in \cite{SchillingsSchwab2013} for uniform measure. We mention that the theoretical prediction has been improved in a recent work \cite{ZechSchwab2017} for uniform measure, from which we may expect an improvement for the Gaussian measure. Note that the a-posteriori error indicator results in smaller quadrature errors than those produced by the a-priori error indicator, even though the former can not guarantee the theoretical prediction. The convergence of the sparse quadrature errors with respect to the number of quadrature points in $\Lambda$ are shown in the top-right part of Fig.\ \ref{fig:convergence}, which is slightly slower than that with respect to the number of indices, yet still implies a rate of $s \geq \alpha - 1/2$. The convergence for the averaged Monte Carlo quadrature errors with 10 trials is also shown in this figure. It is evident that the averaged asymptotic convergence rate of the Monte Carlo quadrature errors is about $N^{-s}$ with $s = 1/2$ for all cases of $\alpha = 1, 2, 3$, which is smaller that that of the sparse quadrature errors for $\alpha = 2, 3$. Even in the case of $\alpha = 1$, the sparse quadrature errors are smaller than the Monte Carlo quadrature errors. In the construction by the a-posteriori error indicator, the total computational cost include that for PDE solves at the quadrature points corresponding to the indices in the forward neighbor set $\cN(\Lambda)$, as mentioned in Remark \ref{rmk:prioriposteriori}. Therefore, it is important to consider the convergence with respect to this total computational cost in terms of PDE solves (quadrature points) in $\bar{\Lambda} = \Lambda \cup \cN(\Lambda)$, which is displayed in the bottom part of Fig.\ \ref{fig:convergence}. Again, we can observe the convergence rate $s \geq \alpha - 1/2$ for the sparse quadrature errors with respect to the number of both indices and quadrature points, which is faster than that of Monte Carlo for $\alpha = 2, 3$.  

Finally, in Fig.\ \ref{fig:level} we plot the maximum levels of indices $l_j = \max_{\bsnu \in \Lambda \cup \cN(\Lambda)} \nu_j$ in each parameer dimension $j = 1, \dots, 1025$, among all indices $\bsnu \in \Lambda \cup \cN(\Lambda)$. Note that the dimension $j$ is activated in $\Lambda$ once $l_j \geq 2$, and in $\Lambda \cup \cN(\Lambda)$ once $l_j \geq 1$. From the plot we can observe that several hundred of dimensions are activated in $\Lambda \cup \cN(\Lambda)$ while less than one hundred dimensions are activated in $\Lambda$. As the reference quadrature value used in Fig.\ \ref{fig:convergence} is computed using all the indices in $\Lambda \cup \cN(\Lambda)$, it implies that the asymptotic convergence rates displayed in Fig.\ \ref{fig:convergence} are indeed dimension-independent. Moreover, from Fig.\ \ref{fig:level} we can also observe that more dimensions with bigger maximum levels in the first few dimensions are activated for $\alpha = 2$ than for $\alpha = 1$, which agrees with the fact that the former has stronger anisotropicity. Furthermore, we remark that once the parametrization \eqref{eq:kappa1025} is fixed, the a-priori error indicator will produce the same index set regardless of the integrand, while for the a-posteriori error indicator, the constructed index set depends on the integrand and likely results in smaller quadrature errors.

\begin{figure}[!htb]
\begin{center}
\includegraphics[scale=0.33]{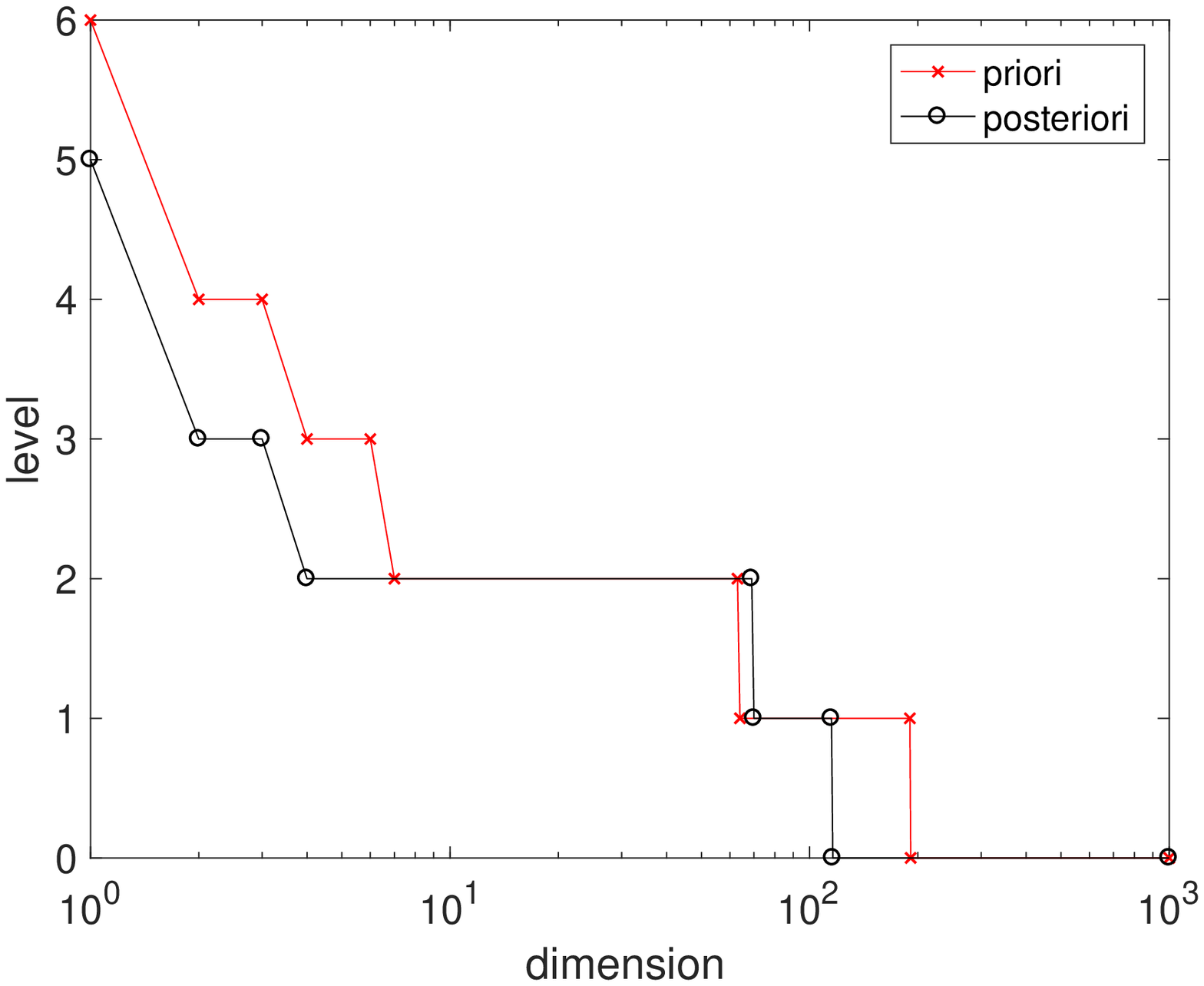}
\hspace*{.1cm}
\includegraphics[scale=0.33]{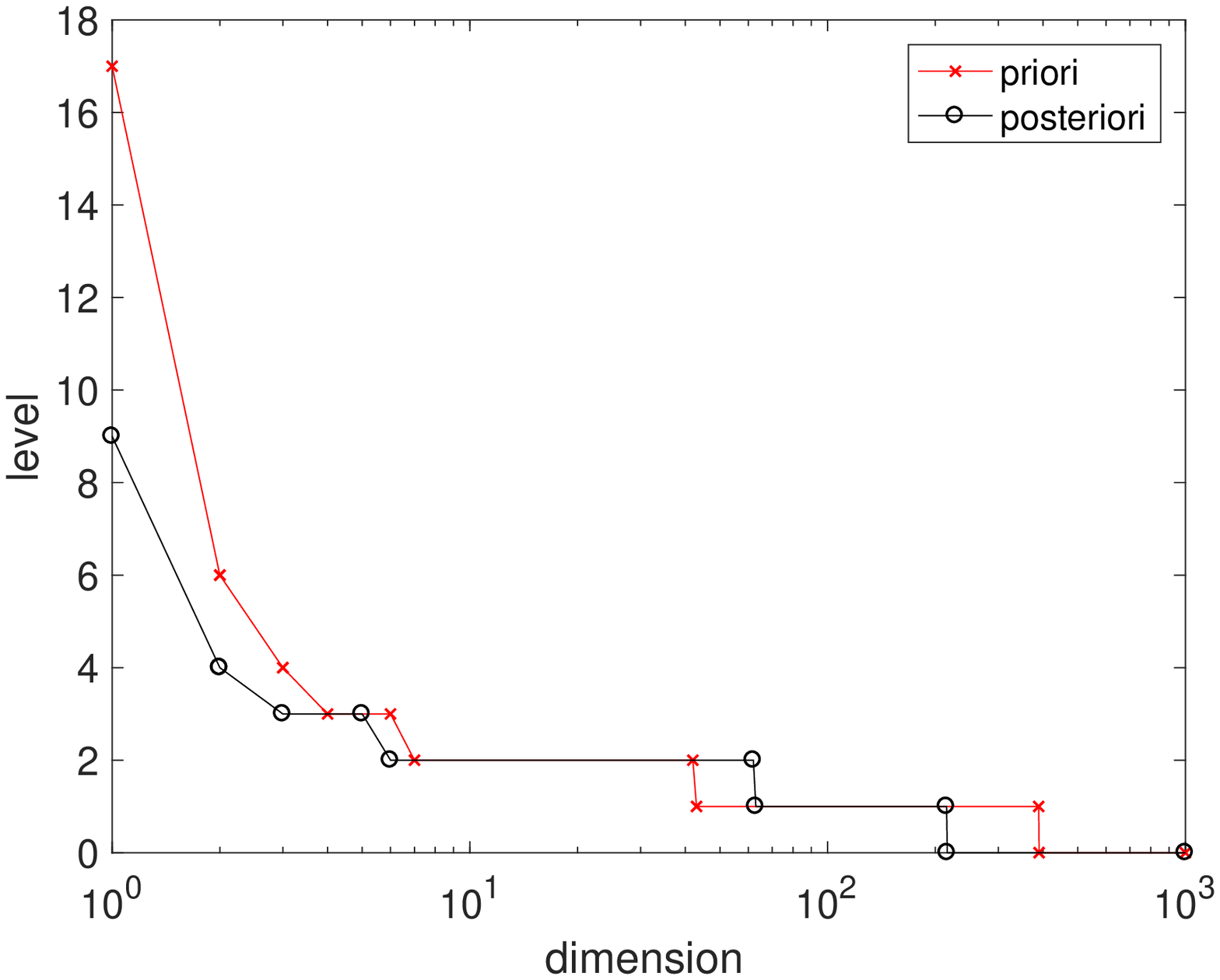}
\end{center}
\caption{Comparison between a-priori and a-posteriori constructions for maximum level of indices constructed in each dimension for indices in $\bar{\Lambda} = \Lambda \cup \cN(\Lambda)$. $\alpha = 1$ (left), $\alpha = 2$ (right).}\label{fig:level}
\end{figure}

\section{Conclusion}
\label{sec:conclusion}
In this work, we proposed and analysed a sparse polynomial approximation for the solution of optimal control problems constrained by elliptic PDEs with lognormal random coefficients. Under certain assumptions on the infinite-dimensional parametrization of the lognormal random field, we proved that the convergence rate of the sparse polynomial approximation of the optimal solution is dimension-independent. For the computation of the expectation of the optimal solution, we presented a polynomial-based sparse quadrature as a sum of tensorized univariate quadrature in a downward closed index set. Given the convergence property of the sparse polynomial approximation, we also established the convergence rate of the sparse quadrature under assumptions of exactness and boundedness of the univariate quadrature rule. Numerical experiments for a 1025-dimensional optimal control problem confirmed that the convergence rate of the sparse quadrature is independent of the number of active parameter dimensions. Moreover, the convergence can be much faster than that of Monte Carlo quadrature provided the lognormal random field is sufficiently smooth or the modes of its parametrization decay sufficiently quickly. The optimal control problem we considered is relatively simple, in the sense that the control function is distributed and parameter-dependent, and the cost functional involves only the expectation as the risk measure. Further analysis and application of the sparse polynomial approximation and sparse quadrature are desirable for more general optimal control problems with other types of control functions, risk measures, and PDE constraints. 

\bibliographystyle{plain}
\bibliography{bibliographyAuth3Year.bib}

\end{document}